\documentclass[11pt]{article}

\usepackage[T1]{fontenc}
\usepackage{lmodern}
\usepackage{amsmath,amssymb,amsthm,mathtools}
\usepackage{booktabs,longtable,needspace}
\usepackage[margin=1.15in]{geometry}
\usepackage{microtype}
\usepackage{xurl}
\usepackage[hidelinks]{hyperref}

\hypersetup{
  pdftitle={Type-Voltage Covers and Finite Locally Kneser Graphs},
  pdfauthor={Weiqi Jiang},
  pdfkeywords={Kneser graph, locally homogeneous graph, voltage graph,
    matching complex, simplicial collapse, Burnside lemma}
}

\newtheorem{theorem}{Theorem}[section]
\newtheorem{proposition}[theorem]{Proposition}
\newtheorem{lemma}[theorem]{Lemma}
\newtheorem{corollary}[theorem]{Corollary}
\theoremstyle{definition}
\newtheorem{definition}[theorem]{Definition}
\theoremstyle{remark}

\title{Type-Voltage Covers and Finite Locally Kneser Graphs}
\author{Weiqi Jiang\\Institute of Theoretical Physics, Chinese Academy of Sciences\\jiangweiqi@itp.ac.cn}
\date{}

\begin{document}

\maketitle

\begin{abstract}
For every $d\geq3$ we construct a connected graph of order
$2\binom{3d+1}{d}$ that is locally $K(2d+1,d)$.  Fix a block $A$ and put
$a'=\min(|A|,3d+1-|A|)$.  Among the loopless binary voltages on the fixed
labeled base $K(3d+1,d)$ that depend only on the intersection types with $A$,
the local-neighborhood-preserving assignments form, modulo gauge, an
$\mathbb F_2$-space of dimension $[q^{a'-5}]\binom{d}{3}_q$, with explicit
canonical coordinates.  Adjacent-line
rigidity holds on $2d+2\leq n\leq3d$: every loopless fixed-block
type-invariant local-neighborhood-preserving binary voltage is gauge trivial.
Dropping type
invariance over the fixed labeled $K(10,3)$, we classify all
local-neighborhood-preserving binary voltage assignments and find that
$H^1(M_3(10);\mathbb F_2)$ has dimension 42.  Under
base relabeling by $S_{10}$ these classes form 1,245,395 orbits, of which
1,245,394 consist of connected covers; an explicit class has orbit size 126
and stabilizer $S_5\wr C_2$.  The proofs combine a triangle-voltage criterion and
cohomology with a simplicial collapse and Burnside enumeration.
\end{abstract}

\medskip
\noindent\textbf{2020 Mathematics Subject Classification.}
Primary 05C25; Secondary 05C76, 05C50, 05E18, 05E45.

\smallskip
\noindent\textbf{Keywords.}
Kneser graph, locally homogeneous graph, voltage graph, matching complex,
simplicial collapse, Burnside lemma.

\section{Introduction}

The Kneser graph $K(n,d)$ has the $d$-subsets of an $n$-element set as its
vertices, with adjacency given by disjointness.  A graph is \emph{locally
$L$} if the subgraph induced by every open neighborhood is isomorphic to
$L$.  The standard construction is $K(n+d,d)$, whose neighborhoods are
copies of $K(n,d)$.  Hall proved that it is the only connected example when
$n\geq3d+1$~\cite{hall1987local}.  Brouwer constructed nonstandard examples
on the boundary $n=3d$ and at several sporadic parameter pairs, and raised
the question of finding further finite graphs locally $K(7,3)$ beyond the
standard $K(10,3)$~\cite{brouwer2024some}.

This problem belongs to the wider study of locally homogeneous graphs and
link-preserving coverings.  Nedela developed the covering-space viewpoint
for locally homogeneous graphs and for projections preserving vertex and
edge links~\cite{nedela1993covering,nedela1994covering}; Weetman gave a
general construction theory~\cite{weetman1994construction}.  We use the
voltage-graph language introduced by Gross~\cite{gross1974voltage}.  For a
binary voltage assignment, a base edge records whether its two lifted
fibers are joined straight or crossed.  Preserving the local graph is then
equivalent to requiring zero total voltage on every triangle, so the local
condition becomes a cocycle condition and gauge changes become
coboundaries.

We develop two complementary versions of that idea.  The structured general
branch varies the parameters while fixing a block in the ground set and
restricting to loopless type-invariant voltages; it yields a classification
uniform in $d$, canonical coordinates, and an infinite family.  The exhaustive
fixed-base branch fixes the labeled base $K(10,3)$, allows all binary voltage
assignments, and analyzes the resulting $S_{10}$ symmetry.

We use the convention $[q^m]f(q)=0$ for $m<0$.  The main results are as
follows.

\begin{theorem}\label{thm:v2-main}
Let $d\geq3$.
\begin{enumerate}
\item There is a connected finite graph on $2\binom{3d+1}{d}$ vertices that
  is locally $K(2d+1,d)$.
\item Fix $A\subseteq\Omega$, where $|\Omega|=3d+1$, and put
  $a'=\min(|A|,3d+1-|A|)$.  The local-neighborhood-preserving, loopless,
  fixed-block type-invariant voltages on the fixed labeled $K(3d+1,d)$,
  modulo gauge, form an $\mathbb F_2$-space of dimension
  \[
    [q^{a'-5}]\binom{d}{3}_q.
  \]
  They have canonical coordinates indexed by the triples
  $0\leq k<i<j\leq d-1$ with $k+i+j=a'-2$.
\item If $2d+2\leq n\leq3d$, every loopless fixed-block type-invariant
  local-neighborhood-preserving binary voltage on $K(n+d,d)$ is gauge
  equivalent to zero.
\item There are exactly $2^{42}$ fixed-base gauge classes of binary voltage
  assignments on the fixed labeled base $K(10,3)$ that preserve every local
  neighborhood.  They are parametrized by
  $H^1(M_3(10);\mathbb F_2)$, which has dimension 42; only the zero class is
  disconnected.
\item The induced $S_{10}$ action has 1,245,395 base-relabeling orbits on
  these classes, of which 1,245,394 consist of connected covers.
\item The class of the explicit 240-vertex cover has $S_{10}$-orbit size 126
  and stabilizer $S_5\wr C_2$.
\end{enumerate}
\end{theorem}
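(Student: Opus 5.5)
The plan is to reduce everything to a cocycle problem on the triangle complex of the base graph and then treat the two branches separately.

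\textbf{Triangle criterion.} First prove the general criterion. Let $\phi\colon E(K(n+d,d))\to\mathbb F_2$ be a binary voltage. The derived double cover is locally $K(n,d)$ exactly when $\phi$ sums to zero on every triangle. The reason is that the neighborhood of a lifted vertex projects bijectively onto a base neighborhood. An edge inside that neighborhood lifts to an edge between the two chosen lifts iff the triangle voltage vanishes. Gauge changes (switching at a vertex) preserve this condition. Hence the local-neighborhood-preserving classes are $H^1$ of the clique complex truncated at dimension $2$. For $K(3d,3)$-type bases, and in particular $K(10,3)$, triangles are triples of pairwise disjoint $3$-sets. So this complex is the $2$-skeleton of the matching complex $M_3(10)$ of $3$-uniform hypergraphs, which gives the identification in item 4.

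\textbf{Structured branch (items 2, 3, 1).} Fix $A$ and restrict to voltages depending only on the intersection types $(|x\cap A|,|y\cap A|)$ of an edge $xy$. A triangle of pairwise disjoint $d$-sets has type $(i,j,k)$ with $i+j+k\le |A|$ and complementary constraints outside $A$. The cocycle condition therefore becomes a finite linear system on a type graph. Gauge reduces to type-invariant switching functions $t\mapsto g(t)$.

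I would pass from $A$ to its complement, justifying the symmetry $a'=\min(|A|,3d+1-|A|)$. I would then show the solution space modulo coboundaries has a basis indexed by triples $k<i<j$ with $k+i+j=a'-2$. This amounts to counting triangle types whose boundary is "free" after gauge fixing a spanning tree of the type graph. The count of such triples equals the coefficient $[q^{a'-5}]\binom{d}{3}_q$, by the standard partition interpretation of Gaussian binomials: subtract $(0,1,2)$ to get a partition into at most $3$ parts each $\le d-3$.

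For item 3, when $n\le 3d$ the extra room in the complement of a triangle creates more triangle types. Each would-be canonical coordinate is then forced to zero, so only the coboundaries survive.

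For item 1, I take $a'$ such that $[q^{a'-5}]\binom d3_q\ne0$, for example $a'=5$ (valid since $d\ge3$). This gives a nonzero class on $K(3d+1,d)$, and its derived cover has $2\binom{3d+1}{d}$ vertices and is locally $K(2d+1,d)$. To show it is connected, I need that the class is nontrivial on some cycle, i.e.\ that it is not a coboundary. For double covers, nonzero in $H^1$ is exactly connectivity.

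\textbf{Fixed-base branch (items 4--6).} I compute $H^1(M_3(10);\mathbb F_2)$ by a sequence of elementary simplicial collapses that reduce the $2$-skeleton to a smaller complex, then finish with a rank computation over $\mathbb F_2$, yielding $42$. Disconnectedness of a double cover happens iff its class is zero, which gives the claim that only the zero class is disconnected. For item 5, I apply Burnside's lemma to the $S_{10}$ action on $H^1\cong\mathbb F_2^{42}$. By conjugacy-class representatives the count is $\frac1{|S_{10}|}\sum_{\sigma}2^{\dim\ker(\sigma-1)}$, which requires the fixed-space dimension for each of the $42$ cycle types; the connected count then subtracts the zero orbit. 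For item 6, I identify the explicit $240$-vertex cover's class, which is associated with a $5|5$ split of the ground set. I check it is fixed by $S_5\wr C_2$ and that this subgroup is maximal among fixers, giving orbit size $10!/(2\cdot 120^2)=126$.

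\textbf{Main obstacle.} The hardest step is the uniform-in-$d$ linear algebra for item 2: organizing the type-triangle relations so that the canonical coordinates are provably independent and spanning. I would try an explicit gauge-fixing order on types (lexicographic in $(|x\cap A|)$) and an induction on $d$ that matches the $q$-Pascal recursion for $\binom d3_q$.
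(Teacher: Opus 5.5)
Your cohomological framework matches the paper: the triangle criterion, cocycles modulo coboundaries, the identification with $M_3(10)$, and Burnside over the $42$ cycle types. The uniform-in-$d$ items, however, are not actually proved.

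\textbf{Items~2 and~3.} For item~2 you never determine which edges stay free after gauge-fixing a spanning tree, nor show that the face equations on the remaining edges are independent. An induction matching the $q$-Pascal recursion is a hope, not an argument. Your phrase ``triangle types whose boundary is free'' also points at the wrong objects. The free coordinates are \emph{edges} $ij$, and the indexing triple $(k,i,j)$ has sum $a'-2$, so it is not a face (faces have sum $a-1$ or $a$).

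The missing idea is an explicit acyclic matching, i.e.\ a simplicial collapse.
\begin{itemize}
\item Take the tree $T_{d,a}$: the star at type $0$ when $L=0$; when $L>0$, the edges $0j$ for $L\le j\le d$ together with $id$ for $1\le i<L$.
\item Define the pivot $p(x,y,z)$ to be $yz$ if $x=0$; $xy$ on sum-$a$ faces, and on sum-$(a-1)$ faces with $z=d$, $x<L$; and $xz$ otherwise.
\item Check that $p$ is a bijection from faces onto $E(Y_{d,a})\setminus(T_{d,a}\cup F_{d,a})$.
\item Check that $\Phi(x,y,z)=z-y-x$ strictly increases along the arrows $g\to f$, where $g\neq f$ contains $p(f)$.
\end{itemize}
Removing faces in topological order is then a sequence of elementary collapses onto the graph $T_{d,a}\cup F_{d,a}$. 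This gives $\dim H^1=|F_{d,a}|$ and the canonical coordinates at once.

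Item~3 is only heuristic, and the heuristic overlooks new edges. Passing to $n=2d+2$ adds the sum-$(a-2)$ faces, which kill $F$, but also new edges of sum $a-d-2$, which could create new cycles. The paper extends the matching: sum-$(a-2)$ faces with $z<d$ pair with $\mathcal F$, and those with $z=d$ pair with the new edges. Acyclicity follows from a lexicographic key, so $Y_{d,2d+2,a}$ collapses to a tree. For each further $s$, every new edge $ij$ of sum $a-d-s$ is paired with its unique new face $(i,j,d)$. The other new faces are attached along old edges, so they only enlarge $\operatorname{im}\partial_2$.

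\textbf{From the type complex to the full graph.} You assert that gauge reduces to type-invariant switchings. In item~1 you infer connectedness from ``nonzero in $H^1$.'' But your class is only known to be nonzero in $H^1(Y_{d,a};\mathbb F_2)$. A type-invariant voltage could a priori equal $\delta f$ for an $f$ not constant on types. The connectedness criterion needs nonvanishing in $H^1$ of the full clique complex. The paper closes this twice:
\begin{itemize}
\item For $|A|=5$ with auxiliary edge $12$, it exhibits a closed walk with type sequence $3,2,2,1,3$ and voltage $1$.
\item In general, $f(hS)+f(S)$ defines a homomorphism $S_A\times S_{\Omega\setminus A}\to\mathbb F_2$ that is trivial on vertex stabilizers. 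It is therefore trivial, because a type-$0$ stabilizer contains $S_A$ and a copy of $S_d$ with $d\ge3$.
\end{itemize}

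\textbf{Item~6.} Your claim ``maximal among fixers'' is unproved. Also, the block swap does not fix the cocycle $\alpha_A$, only its class. That needs the explicit identity $\alpha_A+\alpha_{A^c}=\delta f_A$, where $f_A(S)=1$ exactly when $|S\cap A|\in\{1,2\}$. You can complete your route in two steps. First, use that the stabilizer $S_5\wr C_2$ of a $5|5$ partition is a maximal subgroup of $S_{10}$. Second, exhibit one closed walk whose voltage separates $[\alpha_A]$ from some $[\alpha_C]$, which rules out the whole of $S_{10}$. The paper instead computes the orbit directly: four closed walks separate the cases $1\le|A\cap C|\le4$, giving orbit size $126$ and hence stabilizer order $10!/126$.

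\textbf{Items~4 and~5.} These agree with the paper up to the finite rank computations, which neither you nor the paper can avoid.
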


The equivalence levels in the theorem are deliberately distinct:
fixed-base gauge equivalence preserves the projection pointwise, a
base-relabeling orbit also permits an automorphism of the base, and neither
relation is asserted to coincide with abstract graph isomorphism after the
projection is forgotten.

Section~\ref{sec:v2-type-complex} begins the type-voltage branch, followed by
the classification in Section~\ref{sec:v2-type-classification} and the
uniform family in Section~\ref{sec:v2-uniform-family};
Sections~\ref{sec:v2-k103-cohomology} and
\ref{sec:v2-k103-symmetry-orbits} give the unrestricted $K(10,3)$ analysis,
and Section~\ref{sec:v2-adjacent-lines} proves the adjacent-line rigidity.

\section{Binary Voltage Covers}

We record the common covering construction used throughout the paper.  All
sheet coordinates and voltage sums in this section are taken in
$\mathbb F_2$.

\begin{definition}\label{def:v2-binary-voltage-lift}
Let $G=(V(G),E(G))$ be a connected simple graph, and let
\[
  \alpha:E(G)\longrightarrow\mathbb F_2
\]
be a voltage assignment on the unoriented edges of $G$.  The associated
binary lift $\widetilde G_\alpha$ is the graph with vertex set
\[
  V(\widetilde G_\alpha)=V(G)\times\mathbb F_2,
\]
in which $(u,\epsilon)$ and $(v,\eta)$ are adjacent exactly when
$uv\in E(G)$ and
\[
  \eta=\epsilon+\alpha(uv).
\]
Its natural projection is
\[
  p:\widetilde G_\alpha\longrightarrow G,
  \qquad p(v,\epsilon)=v.
\]
Thus every vertex has a two-point fiber, and every base edge has exactly two
lifts.
\end{definition}

Relabeling the two sheets independently over each base vertex gives the
standard gauge operation.

\begin{proposition}[Gauge change]\label{prop:v2-gauge-change}
For a function $f:V(G)\to\mathbb F_2$, define the coboundary
\[
  (\delta f)(uv)=f(u)+f(v).
\]
If $\alpha'=\alpha+\delta f$, then
\[
  \Phi_f:\widetilde G_\alpha\longrightarrow\widetilde G_{\alpha'},
  \qquad
  (v,\epsilon)\longmapsto(v,\epsilon+f(v))
\]
is an isomorphism over $G$.  Conversely, a graph isomorphism between two
binary lifts that commutes with projection and preserves each fiber exists
only when their voltage assignments differ by a coboundary.
\end{proposition}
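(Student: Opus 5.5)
The plan has two parts: a direct verification for the forward direction, and a fiberwise analysis of the isomorphism for the converse.

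\emph{Forward direction.} First, $\Phi_f$ is a bijection on $V(G)\times\mathbb F_2$, since it is an involution: applying it twice adds $2f(v)=0$. It clearly commutes with the projection $p$. It remains to check that it preserves adjacency in both directions. Take $uv\in E(G)$. The pair $(u,\epsilon)$, $(v,\eta)$ is an edge of $\widetilde G_\alpha$ exactly when $\eta=\epsilon+\alpha(uv)$. Adding $f(u)+f(v)$ to both sides, this is equivalent to
\[
\eta+f(v)=\epsilon+f(u)+\alpha(uv)+f(u)+f(v)=\bigl(\epsilon+f(u)\bigr)+\alpha'(uv).
\]
That is precisely the condition for $\Phi_f(u,\epsilon)$ and $\Phi_f(v,\eta)$ to be adjacent in $\widetilde G_{\alpha'}$. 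Pairs lying over non-edges of $G$, or inside a single fiber, are non-adjacent in both lifts, because adjacency requires $uv\in E(G)$ and $G$ is simple. Hence $\Phi_f$ is an isomorphism over $G$.

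\emph{Converse.} Let $\Psi:\widetilde G_\alpha\to\widetilde G_{\alpha'}$ be an isomorphism with $p\circ\Psi=p$; this already forces $\Psi$ to map each fiber $\{v\}\times\mathbb F_2$ onto itself. The key step is to see that $\Psi$ is a translation on each fiber. The restriction of $\Psi$ to a two-point fiber is a bijection of $\mathbb F_2$, and there are only two such bijections, $\epsilon\mapsto\epsilon$ and $\epsilon\mapsto\epsilon+1$. Both are translations, so there is a unique $f(v)\in\mathbb F_2$ with $\Psi(v,\epsilon)=(v,\epsilon+f(v))$. Now fix an edge $uv$. In $\widetilde G_\alpha$ the vertex $(u,0)$ is adjacent to $(v,\alpha(uv))$, so their images $(u,f(u))$ and $(v,\alpha(uv)+f(v))$ must be adjacent in $\widetilde G_{\alpha'}$. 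This gives $\alpha(uv)+f(v)=f(u)+\alpha'(uv)$, that is, $\alpha'=\alpha+\delta f$.

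The only point that needs care is the fiberwise-translation step, and it is immediate because the fibers have exactly two points. Connectivity of $G$ is not needed for either direction.
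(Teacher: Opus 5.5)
Your proof is correct and follows essentially the same route as the paper. For the forward direction you check directly that $\Phi_f$ preserves adjacency, getting bijectivity from $\Phi_f$ being an involution together with an if-and-only-if adjacency computation, where the paper instead exhibits the edge-preserving inverse via $\alpha=\alpha'+\delta f$; for the converse you, like the paper, use that a fiber-preserving map on two-point fibers is a translation $\epsilon\mapsto\epsilon+f(v)$, and then read off $\alpha'=\alpha+\delta f$ from a single lifted edge.
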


\begin{proof}
Suppose that $\alpha'=\alpha+\delta f$.  The endpoints of the lifted edge
above $uv$ that starts at $(u,\epsilon)$ are
\[
  (u,\epsilon)
  \quad\text{and}\quad
  (v,\epsilon+\alpha(uv)).
\]
The sum of the sheet coordinates of their images under $\Phi_f$ is
\[
  \alpha(uv)+f(u)+f(v)=\alpha'(uv),
\]
so the images are adjacent in $\widetilde G_{\alpha'}$.  The same formula,
together with $\alpha=\alpha'+\delta f$, proves that $\Phi_f$ has an
edge-preserving inverse.  It does not change the base coordinate, and hence
is an isomorphism over $G$.

Conversely, let $\Phi:\widetilde G_\alpha\to\widetilde G_{\alpha'}$ be such
an isomorphism.  Every permutation of a two-element fiber is translation by
a unique element of $\mathbb F_2$.  Thus there is a function
$f:V(G)\to\mathbb F_2$ for which
\[
  \Phi(v,\epsilon)=(v,\epsilon+f(v)).
\]
Applying $\Phi$ to either lifted edge above $uv$ and using the edge rule in
the target gives
\[
  \alpha'(uv)=\alpha(uv)+f(u)+f(v).
\]
Therefore $\alpha'=\alpha+\delta f$.
\end{proof}

The projection is always locally bijective on vertices.  The point of the
next criterion is to determine when it also preserves the edges inside an
open neighborhood.

\begin{proposition}[Triangle criterion]
\label{prop:v2-triangle-criterion}
Fix $x\in V(G)$ and $i\in\mathbb F_2$.  Projection restricts to a bijection
\[
  N_{\widetilde G_\alpha}((x,i))\longrightarrow N_G(x).
\]
This bijection is an isomorphism between the graphs induced on the two open
neighborhoods if and only if every triangle $xuv$ through $x$ satisfies
\begin{equation}\label{eq:v2-triangle-voltage}
  \alpha(xu)+\alpha(uv)+\alpha(vx)=0.
\end{equation}
Consequently, projection induces a graph isomorphism on every open
neighborhood of the lift if and only if every triangle of $G$ has total
voltage zero.
\end{proposition}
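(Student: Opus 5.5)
The proof is a direct unwinding of Definition~\ref{def:v2-binary-voltage-lift}; no earlier result beyond the definition is needed. There are three steps: the bijection on neighborhoods, the edge correspondence inside a neighborhood, and the global consequence.

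\emph{Bijection.} The neighbors of $(x,i)$ in $\widetilde G_\alpha$ are exactly the vertices $(u,i+\alpha(xu))$ with $u\in N_G(x)$. For each such $u$ the sheet coordinate is forced, so there is exactly one neighbor of $(x,i)$ in the fiber over $u$. Neighbors over distinct base vertices project to distinct points, and every $u\in N_G(x)$ is hit. Hence $p$ restricts to a bijection $N_{\widetilde G_\alpha}((x,i))\to N_G(x)$, with inverse
\[
  s(u)=(u,i+\alpha(xu)).
\]

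\emph{Edges.} Since $p$ is a graph homomorphism, every edge inside the lifted neighborhood maps to an edge inside $N_G(x)$. So the restricted bijection is an isomorphism of induced subgraphs if and only if $s$ preserves edges. Take $u,v\in N_G(x)$ with $uv\in E(G)$; these are precisely the triangles $xuv$ through $x$. By the edge rule, $s(u)$ and $s(v)$ are adjacent exactly when
\[
  i+\alpha(xv)=i+\alpha(xu)+\alpha(uv),
\]
that is, when
\[
  \alpha(xu)+\alpha(uv)+\alpha(vx)=0 .
\]
This condition is symmetric in $u$ and $v$, so the orientation of the edge $uv$ does not matter. Therefore edges are preserved if and only if \eqref{eq:v2-triangle-voltage} holds for every triangle through $x$. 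The criterion does not depend on $i$.

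\emph{Global statement.} Every triangle of $G$ passes through some vertex $x$, and every lifted vertex has the form $(x,i)$. Applying the local criterion at every pair $(x,i)$ gives the consequence: projection is an isomorphism on every open neighborhood if and only if every triangle of $G$ has total voltage zero.

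The only delicate point is making sure the edge condition is an \emph{iff} rather than a single implication. Injectivity of $p$ on the neighborhood gives that non-edges upstairs map to non-edges only once we know that $s$ is the inverse map. I would therefore phrase the whole edge argument in terms of $s$, not $p$, to keep both directions visible.
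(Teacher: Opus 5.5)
Your proposal is correct and follows essentially the same route as the paper: the unique neighbor $(u,i+\alpha(xu))$ over each $u\in N_G(x)$ gives the bijection, and the edge rule applied to two such lifts over a base edge $uv$ yields the triangle identity. The homomorphism property of $p$, which is the paper's remark that no lifted edge lies over a base non-edge, handles the remaining direction; phrasing the edge check through the inverse section $s$ is only a tidier bookkeeping of that same argument.
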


\begin{proof}
For every $u\in N_G(x)$, the unique neighbor of $(x,i)$ above $u$ is
\begin{equation}\label{eq:v2-neighbor-lift}
  (u,i+\alpha(xu)).
\end{equation}
This proves the asserted bijection.  Two base neighbors $u,v\in N_G(x)$ are
adjacent precisely when $xuv$ is a triangle.  When $uv\in E(G)$, the two
vertices from \eqref{eq:v2-neighbor-lift} above $u$ and $v$ are adjacent in
the lift exactly when
\[
  \alpha(xu)+\alpha(xv)=\alpha(uv),
\]
which is equivalent to \eqref{eq:v2-triangle-voltage}.  When $uv$ is not a
base edge, no lifted edge can join a vertex over $u$ to a vertex over $v$.
Hence the neighborhood bijection is a graph isomorphism exactly when the
triangle identity holds for every neighborhood edge, equivalently for every
triangle through $x$.  Requiring this at all base vertices is the same as
requiring zero total voltage on every triangle of $G$.
\end{proof}

For a walk $W=(v_0,v_1,\ldots,v_m)$ in $G$, write
\[
  \alpha(W)=\sum_{j=1}^{m}\alpha(v_{j-1}v_j)\in\mathbb F_2.
\]
Because the voltages are attached to unoriented edges, reversing a walk does
not change its voltage.

\begin{lemma}[Connectedness criterion]\label{lem:v2-connected-lift}
For connected $G$, the binary lift $\widetilde G_\alpha$ is connected if and
only if some closed walk in $G$ has total voltage $1$.  Equivalently,
$\widetilde G_\alpha$ is connected if and only if $\alpha$ is not a
coboundary.
\end{lemma}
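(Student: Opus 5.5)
The plan is to prove both equivalences through one key fact: every walk in $G$ lifts to a walk in $\widetilde G_\alpha$ whose sheet coordinate changes by exactly the walk's voltage. Precisely, given a walk $W=(v_0,\dots,v_m)$ in $G$ and a starting sheet $\epsilon$, Definition~\ref{def:v2-binary-voltage-lift} forces the unique lift starting at $(v_0,\epsilon)$ to pass through $(v_j,\epsilon+\sum_{l\le j}\alpha(v_{l-1}v_l))$. So it ends at $(v_m,\epsilon+\alpha(W))$. Conversely, any walk in the lift projects to a walk in $G$, and its sheet change equals the voltage of that projection. I will record this by a one-line induction on $m$.

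For the first equivalence, suppose first that some closed walk $W$ based at $v$ has $\alpha(W)=1$. Its lift joins $(v,0)$ to $(v,1)$, so the two points of that fiber lie in one component. Next I show every vertex $(u,\eta)$ is connected to $(v,0)$. Take any walk from $v$ to $u$ in the connected base $G$ and lift it from $(v,0)$; it reaches $(u,\eta')$ for some $\eta'$. If $\eta'\neq\eta$, first traverse the lift of $W$ from $(v,0)$ to $(v,1)$, and then lift the same path from $(v,1)$, which ends at $(u,\eta)$. Hence the lift is connected.

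Conversely, suppose every closed walk has voltage $0$. Then $(v,0)$ and $(v,1)$ cannot be joined, since a walk between them would project to a closed walk at $v$ of voltage $1$. So the lift is disconnected.

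For the second equivalence, I will show that "every closed walk has voltage $0$" is the same as "$\alpha$ is a coboundary".
\begin{itemize}
\item[(i)] If $\alpha=\delta f$, the sum along a closed walk telescopes to $f(v_0)+f(v_m)=0$.
\item[(ii)] If all closed walks have voltage $0$, fix a base vertex $v$ and define $f(u)=\alpha(P)$ for any walk $P$ from $v$ to $u$. This is well defined, because two such walks $P,P'$ give a closed walk $P$ followed by the reverse of $P'$. Its voltage is $\alpha(P)+\alpha(P')$ (reversal does not change voltage), and this must be $0$. Then for an edge $uw$, appending the edge to a walk to $u$ gives $f(w)=f(u)+\alpha(uw)$, so $\alpha=\delta f$.
\end{itemize}

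Combining the two equivalences gives the lemma. There is no real obstacle here. The only point needing care is the well-definedness of $f$ in step (ii), which uses the remark that unoriented voltages are invariant under reversal.
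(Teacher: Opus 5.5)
Your proposal is correct and follows essentially the same route as the paper: you lift walks so that the sheet change equals the voltage, combine a voltage-one closed walk with connectivity of the base to reach both points of every fiber, project a lifted path between the two points of a fiber for the converse, and build the potential $f$ from walk voltages out of a root. The differences are cosmetic: you argue the converse by contrapositive and prove path-independence of $f$ before appending an edge, whereas the paper fixes paths $P_v$ and applies the zero-voltage hypothesis directly to the closed walk formed by $P_u$, the edge $uv$, and the reverse of $P_v$.
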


\begin{proof}
The lift of a walk $W$ beginning at $(v_0,i)$ ends at
$(v_m,i+\alpha(W))$.  Thus a closed walk of voltage $1$, based at a vertex
$r$, lifts to a path from $(r,0)$ to $(r,1)$.  Since $G$ is connected, choose
a path from $r$ to any prescribed base vertex $v$.  Lifting that path from
each of $(r,0)$ and $(r,1)$ reaches the two different vertices above $v$.
It follows that every vertex of the lift is reachable from $(r,0)$, so the
lift is connected.

Conversely, if the lift is connected, there is a path from $(r,0)$ to
$(r,1)$.  Its projection is a closed walk at $r$, and the change in sheet
coordinate along the path is the voltage of that walk.  This change is $1$,
so the projected closed walk has voltage $1$.

It remains to justify the coboundary reformulation.  If $\alpha=\delta f$,
then the voltage of a closed walk is zero because the values of $f$ at
successive endpoints cancel in pairs.  Conversely, suppose every closed
walk has voltage zero.  Fix a root $r$, choose for each $v\in V(G)$ a path
$P_v$ from $r$ to $v$, and set $f(v)=\alpha(P_v)$.  For every edge $uv$, the
walk obtained by following $P_u$, then $uv$, and then the reverse of $P_v$
is closed.  Its zero voltage gives
\[
  f(u)+\alpha(uv)+f(v)=0,
\]
and hence $\alpha(uv)=f(u)+f(v)=(\delta f)(uv)$.  Therefore $\alpha$ is a
coboundary exactly when no closed walk has voltage $1$, completing all the
equivalences.
\end{proof}

\begin{corollary}[Cohomological classification]
\label{cor:v2-general-cohomology-classification}
Let $X(G)$ be the clique complex of a connected simple graph $G$.  The
fixed-base gauge-equivalence classes of binary voltages for which the natural
projection induces a graph isomorphism on every corresponding open
neighborhood are naturally parametrized by
\[
  H^1(X(G);\mathbb F_2).
\]
The lift $\widetilde G_\alpha$ is connected if and only if $[\alpha]\ne0$;
the zero class gives $G\sqcup G$.
\end{corollary}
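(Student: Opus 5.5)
The plan is to identify the admissible voltages with $1$-cocycles of the clique complex and then quotient by gauge. The $1$-skeleton of $X(G)$ is $G$ itself, so a voltage $\alpha:E(G)\to\mathbb F_2$ is precisely a simplicial $1$-cochain $\alpha\in C^1(X(G);\mathbb F_2)$. The $2$-simplices of $X(G)$ are exactly the triangles of $G$, and the coboundary of a $1$-cochain evaluated on a triangle $xuv$ is $\alpha(xu)+\alpha(uv)+\alpha(vx)$. Hence $\delta\alpha=0$ if and only if every triangle of $G$ has total voltage zero.

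By Proposition~\ref{prop:v2-triangle-criterion}, this vanishing condition is equivalent to the projection inducing a graph isomorphism on every open neighborhood. So the admissible voltages form exactly the cocycle space $Z^1(X(G);\mathbb F_2)$.

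Next I would treat gauge equivalence. The map $\delta:C^0\to C^1$ of the complex is the coboundary $(\delta f)(uv)=f(u)+f(v)$ of Proposition~\ref{prop:v2-gauge-change}. That proposition says two lifts are isomorphic over $G$ fiberwise exactly when their voltages differ by an element of $B^1=\operatorname{im}\delta$. Because $B^1\subseteq Z^1$ (coboundaries vanish on triangles by cancellation), the admissible gauge classes are exactly $Z^1/B^1=H^1(X(G);\mathbb F_2)$. The parametrization is natural, since it is defined directly by $\alpha\mapsto[\alpha]$ without any choices.

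For connectivity I would invoke Lemma~\ref{lem:v2-connected-lift}: the lift is connected if and only if $\alpha\notin B^1$, which for a cocycle means $[\alpha]\neq0$. In the zero class we may gauge to $\alpha=0$. In $\widetilde G_0$ the vertex $(u,\epsilon)$ is adjacent to $(v,\eta)$ only when $\eta=\epsilon$, so the lift is two disjoint copies $G\times\{0\}$ and $G\times\{1\}$, each isomorphic to $G$.

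No step is a real obstacle. The only point needing care is matching conventions: over $\mathbb F_2$, orientations of simplices are irrelevant, so unoriented edge voltages are genuine simplicial cochains. The content is entirely supplied by the three earlier results.
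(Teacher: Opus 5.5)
Your proposal is correct and follows essentially the same route as the paper: identify voltages with $1$-cochains, use Proposition~\ref{prop:v2-triangle-criterion} to get $Z^1$, Proposition~\ref{prop:v2-gauge-change} to quotient by $B^1$, and Lemma~\ref{lem:v2-connected-lift} for connectedness. Your added details, namely $B^1\subseteq Z^1$ and gauging the zero class to $\alpha=0$ to exhibit $G\sqcup G$, simply make explicit what the paper leaves implicit.
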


\begin{proof}
Edge voltages are $1$-cochains on $X(G)$, and
Proposition~\ref{prop:v2-triangle-criterion} identifies the neighborhood
condition with membership in $Z^1(X(G);\mathbb F_2)$.  By
Proposition~\ref{prop:v2-gauge-change}, fixed-base gauge changes add
elements of $B^1(X(G);\mathbb F_2)$, giving
$Z^1(X(G);\mathbb F_2)/B^1(X(G);\mathbb F_2)=H^1(X(G);\mathbb F_2)$.
Lemma~\ref{lem:v2-connected-lift} gives the connectedness assertion, and
the zero voltage has the two sheets $G\sqcup G$.
\end{proof}

\section{The Type Complex}\label{sec:v2-type-complex}

Let $\Omega$ have size $3d+1$, fix $A\subseteq\Omega$ with $|A|=a$, and put
$G=K(3d+1,d)$.  For $S\in V(G)$ define its type by
\[
  t(S)=|S\cap A|.
\]
The base $G$ is connected: for any two $d$-sets $S,T$, the complement of
$S\cup T$ has at least $d+1$ points and therefore contains a $d$-set
adjacent to both.
For compactness, $ij$ denotes the unordered pair $\{i,j\}$.
Let $B$ be a loopless graph whose vertices are the possible types and whose
edges are restricted to type pairs occurring on edges of $G$.  Its associated
type-invariant voltage is
\[
  \alpha_B(ST)=\mathbf 1_{\{t(S),t(T)\}\in E(B)}
\]
for every edge $ST$ of $G$.  The binary lift is the one from
Definition~\ref{def:v2-binary-voltage-lift}.

Here and below, loopless means that $\alpha_B(ST)=0$ whenever
$t(S)=t(T)$.  If $s$ is a function on the type set, type switching replaces
$\alpha_B(ST)$ by
\[
  \alpha_B(ST)+s(t(S))+s(t(T)).
\]
Two type assignments are gauge equivalent when they differ by such a
switching.  This is the restriction of the gauge change in
Proposition~\ref{prop:v2-gauge-change} to potentials that are constant on
types.

Complementing $A$ replaces type $i$ by $d-i$.  We henceforth assume
$0\leq a\leq(3d+1)/2$, and set
\[
  u=\min(d,a),\qquad L=\max(0,a-d-1).
\]

\begin{proposition}[Type feasibility]\label{prop:v2-type-feasibility}
The possible types are $0,\ldots,u$.  A pair of distinct types $i<j$ occurs
on an edge of $G$ if and only if
\begin{equation}\label{eq:v2-type-edge-feasibility}
  L\leq i+j\leq a.
\end{equation}
A triple of distinct types $i<j<k$ occurs on a triangle if and only if
\begin{equation}\label{eq:v2-type-face-feasibility}
  i+j+k\in\{a-1,a\}.
\end{equation}
\end{proposition}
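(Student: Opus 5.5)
The plan is to reduce everything to a counting condition inside $A$ and inside its complement separately. A $d$-set $S$ of type $i$ is the same thing as a choice of an $i$-subset of $A$ together with a $(d-i)$-subset of $A^c:=\Omega\setminus A$. Likewise, a family of pairwise disjoint $d$-sets with prescribed types is the same as pairwise disjoint subsets of $A$ of the prescribed sizes, together with pairwise disjoint subsets of $A^c$ of the complementary sizes. Such disjoint families exist exactly when the sizes sum to at most $|A|=a$ and at most $|A^c|=3d+1-a$, respectively. So every part of the proposition comes down to two linear inequalities.

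\emph{Vertex types.} A type $i$ occurs if and only if $0\le i\le\min(d,a)$ and $d-i\le 3d+1-a$. The last inequality reads $i\ge a-2d-1$. Under the standing assumption $a\le(3d+1)/2<2d+1$, the right side is negative, so this constraint is vacuous and the types are exactly $0,\dots,u$.

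\emph{Edges.} An edge $ST$ with $t(S)=i$ and $t(T)=j$ exists if and only if there are disjoint $i$- and $j$-subsets of $A$ and disjoint $(d-i)$- and $(d-j)$-subsets of $A^c$. This holds if and only if
\[
i+j\le a \qquad\text{and}\qquad 2d-i-j\le 3d+1-a .
\]
The second inequality is $i+j\ge a-d-1$. Since $i+j\ge0$ anyway, it is equivalent to $i+j\ge L$. For sufficiency I would construct the two sets explicitly: take the $i$- and $j$-subsets as disjoint pieces of $A$, and the two complementary pieces as disjoint pieces of $A^c$. Here $i,j\le u$ guarantees that each individual size is nonnegative and fits.

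\emph{Triangles.} A triangle is three pairwise disjoint $d$-sets. With types $i,j,k$, the same argument gives
\[
i+j+k\le a \qquad\text{and}\qquad 3d-(i+j+k)\le 3d+1-a,
\]
that is, $a-1\le i+j+k\le a$. Necessity is immediate from counting the points used inside $A$ and inside $A^c$. Sufficiency follows by cutting three disjoint blocks of sizes $i,j,k$ out of $A$ and three disjoint blocks of sizes $d-i,d-j,d-k$ out of $A^c$, which is possible because the two totals fit. Distinctness of the types plays no role beyond matching the statement.

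I expect no real obstacle. The only point needing care is the vertex-type step, where the standing assumption $a\le(3d+1)/2$ is what makes the lower constraint on $i$ disappear, so that the type set is exactly $\{0,\dots,u\}$ with no lower cutoff. The edge and triangle conditions then need no separate check on the individual types, since each type is already in $\{0,\dots,u\}$.
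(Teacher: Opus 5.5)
Your proposal is correct and takes essentially the same approach as the paper. You count the points each configuration uses inside $A$ and inside $\Omega\setminus A$, get the same capacity inequalities for vertices, edges and triangles (the paper's ``three disjoint $d$-sets use all but one point'' is your $a-1\le i+j+k\le a$), and prove sufficiency by choosing disjoint pieces independently in the two blocks.
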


\begin{proof}
A type-$i$ vertex requires $i\leq a$ points in $A$ and $d-i\leq3d+1-a$
points outside it.  Under the normalization on $a$, these conditions give
exactly $0\leq i\leq u$.

Two disjoint $d$-sets of types $i,j$ require $i+j\leq a$ points in $A$ and
$2d-i-j\leq3d+1-a$ points outside it.  These inequalities are equivalent to
\eqref{eq:v2-type-edge-feasibility}, and they are sufficient by choosing
disjoint subsets independently in the two blocks.

Three pairwise disjoint $d$-sets use all but one point of $\Omega$.  Their
total type is therefore $a$ or $a-1$.  Conversely,
\eqref{eq:v2-type-face-feasibility} partitions each of
$A$ and its complement into the three required parts and a residual part of
size at most one.
\end{proof}

Let $Y_{d,a}$ be the two-dimensional type complex with vertices
$0,\ldots,u$, edges given by \eqref{eq:v2-type-edge-feasibility}, and faces
given by \eqref{eq:v2-type-face-feasibility}.  Thus the
auxiliary graph is understood through its feasible-pair restriction
\[
  E(B)\subseteq E(Y_{d,a}).
\]

\begin{proposition}[Triangle reduction]\label{prop:v2-type-triangle-reduction}
The loopless type-invariant voltages whose lifts preserve every open
neighborhood are precisely
\[
  Z^1(Y_{d,a};\mathbb F_2).
\]
Modulo type switching, their gauge-equivalence classes are naturally
identified with
\[
  H^1(Y_{d,a};\mathbb F_2).
\]
\end{proposition}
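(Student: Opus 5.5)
Via $B\mapsto\alpha_B$, a loopless type-invariant voltage is the same thing as a $1$-cochain $\beta$ on the edge set of $Y_{d,a}$. Here we put $\alpha_\beta(ST)=\beta(t(S)t(T))$ when $t(S)\neq t(T)$ and $\alpha_\beta(ST)=0$ otherwise. By Proposition~\ref{prop:v2-type-feasibility}, every pair of distinct types occurring on an edge of $G$ is an edge of $Y_{d,a}$. Conversely, every edge of $Y_{d,a}$ is realized by some edge of $G$. Hence $\beta\mapsto\alpha_\beta$ is a bijection between $C^1(Y_{d,a};\mathbb F_2)$ and the loopless type-invariant voltages; this is the identification $E(B)\subseteq E(Y_{d,a})$ already noted.

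Next we apply Proposition~\ref{prop:v2-triangle-criterion}: neighborhoods are preserved iff every triangle $STU$ of $G$ has voltage zero. We sort triangles by their type multiset.
\begin{itemize}
\item If all three types are equal, every edge has voltage $0$ by looplessness.
\item If exactly two types are equal, say $t(S)=t(T)=i\neq j=t(U)$, the voltage is $0+\beta(ij)+\beta(ij)=0$.
\item If the types $i<j<k$ are distinct, the voltage is $\beta(ij)+\beta(jk)+\beta(ik)=(\delta\beta)(ijk)$.
\end{itemize}
By \eqref{eq:v2-type-face-feasibility}, the distinct-type triples realized by triangles are exactly the $2$-faces of $Y_{d,a}$, and each face is realized. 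So the condition is exactly $\delta\beta=0$ on every face, i.e.\ $\beta\in Z^1(Y_{d,a};\mathbb F_2)$.

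For the gauge statement, type switching by $s$ changes $\alpha_\beta(ST)$ by $s(t(S))+s(t(T))$. When $t(S)\neq t(T)$ this is $(\delta s)(t(S)t(T))$, and when $t(S)=t(T)$ it is $0$, so looplessness is preserved. Thus switching corresponds precisely to adding coboundaries $\delta s\in B^1(Y_{d,a};\mathbb F_2)$, and every function $s$ on the vertex set $\{0,\dots,u\}$ of $Y_{d,a}$ arises. Therefore the classes modulo switching are $Z^1/B^1=H^1(Y_{d,a};\mathbb F_2)$.

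The only step needing care is the realization claim used in the second paragraph: every face of $Y_{d,a}$ actually occurs as a triangle of $G$, so that no cocycle condition is lost. This is already supplied by the "if and only if" of Proposition~\ref{prop:v2-type-feasibility}. The rest is bookkeeping, in particular checking that degenerate type patterns impose no constraint.
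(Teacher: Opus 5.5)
Your proof is correct and follows the same approach as the paper. You sort base triangles by type pattern and note that repeated-type triangles contribute zero by cancellation and looplessness. You then identify distinct-type triangles with faces of $Y_{d,a}$ via Proposition~\ref{prop:v2-type-feasibility}, apply the triangle criterion, and observe that type switching adds exactly the coboundaries $\delta s$. Your explicit check that every edge and face of $Y_{d,a}$ is realized in $G$, so that $\beta\mapsto\alpha_\beta$ is a bijection and no cocycle condition is lost, spells out what the paper's proof leaves implicit.
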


\begin{proof}
If a base triangle has a repeated type, the two equal off-diagonal voltage
contributions cancel over $\mathbb F_2$, while the loop contribution is zero.
For three distinct types, Proposition~\ref{prop:v2-type-feasibility} says that
the triangle is exactly a face of $Y_{d,a}$.  The common triangle criterion,
Proposition~\ref{prop:v2-triangle-criterion}, therefore says that preservation
of all open neighborhoods is equivalent to the cocycle equation on every
face of the type complex.  Finally, a type switching adds the coboundary of
the type function $s$, so quotienting the cocycles by these switchings gives
the stated first cohomology group.
\end{proof}

In this type context, an admissible voltage means a loopless type-invariant
binary voltage whose lift preserves every open neighborhood.

\section{Type Classification}\label{sec:v2-type-classification}

Define
\begin{equation}\label{eq:v2-type-tree}
T_{d,a}=\begin{cases}
\{0j:1\leq j\leq u\},&L=0,\\
\{0j:L\leq j\leq d\}\cup\{id:1\leq i<L\},&L>0,
\end{cases}
\end{equation}
and
\begin{equation}\label{eq:v2-type-free-edges}
F_{d,a}=\{ij:k=a-2-i-j,\ 0\leq k<i<j\leq d-1\}.
\end{equation}
The first set is a spanning tree of the one-skeleton of $Y_{d,a}$.  If $L=0$,
every nonzero vertex is joined directly to $0$.  If $L>0$, then $u=d$; the
vertices $L,\ldots,d$ are joined to $0$, while $1,\ldots,L-1$ are joined to
$d$.  The displayed graph is connected and has $u$ edges, hence is a tree.

For a face $f=(x,y,z)$, with $x<y<z$, put
\begin{equation}\label{eq:v2-type-pivot}
p(f)=\begin{cases}
yz,&x=0,\\
xy,&x>0\text{ and }x+y+z=a,\\
xy,&x>0,\ x+y+z=a-1,\ z=d,\ x<L,\\
xz,&\text{otherwise}.
\end{cases}
\end{equation}

\begin{lemma}[Pivot matching]\label{lem:v2-type-pivot}
The map $p$ is a bijection from the faces of $Y_{d,a}$ onto
$E(Y_{d,a})\setminus(T_{d,a}\cup F_{d,a})$.
\end{lemma}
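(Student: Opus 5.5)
The plan is to prove the bijection by writing down an explicit inverse. First we check that $p$ lands in the right set: $p(f)$ is an edge of $Y_{d,a}$ that lies in neither $T_{d,a}$ nor $F_{d,a}$. Then we show that every edge of $E(Y_{d,a})\setminus(T_{d,a}\cup F_{d,a})$ has exactly one preimage. Throughout, write an edge as $ij$ with $i<j$ and $s=i+j$. By \eqref{eq:v2-type-face-feasibility}, a face containing $ij$ is $\{i,j,k\}$ with $k\in\{a-s,\,a-1-s\}$, $k\notin\{i,j\}$ and $0\le k\le u$. So each edge lies in at most two faces, and the task reduces to a finite case analysis on where $k_1=a-s$ and $k_2=a-1-s$ sit relative to $0$, $i$, $j$ and $u$.

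\textbf{Well-definedness.} Every pair inside a face is an edge of $Y_{d,a}$, since $Y_{d,a}$ is the type complex. Next we show that no pivot is a tree edge:
\begin{itemize}
\item For $x=0$ the pivot $yz$ has $y\ge1$. The only tree edges not containing $0$ are the edges $id$ with $i<L$, which occur only when $L>0$. The pivot $yz$ would be such an edge only if $z=d$, $y<L$ and $y+d\in\{a-1,a\}$. But then $y+d\ge a-1\ge L+d$ forces $y\ge L$, a contradiction.
\item For $x>0$ the pivot does not contain $0$. It could be a tree edge only in the form $xd$ with $x<L$, and this is exactly what the special third line of \eqref{eq:v2-type-pivot} redirects to $xy$.
\end{itemize}
To exclude $F_{d,a}$: an edge $ij\in F_{d,a}$ has its partner $a-2-s$ strictly below $i$ and is at most $d-1$. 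Since every pivot has complementary vertex $k_1$ or $k_2$, we compare $a-2-s$ with these values. This shows that the pivot rules never produce an edge whose smallest completion is at $a-2-s<i$, with the $z=d$ exception handled by $j\le d-1$.

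\textbf{Inverse.} Given an admissible edge $ij$ with $i\ge1$ (edges $0j$ are either in $T_{d,a}$ or infeasible when $j<L$), we define $p^{-1}(ij)$ by the following rules, applied in order:
\begin{enumerate}
\item If $s\in\{a-1,a\}$, take the face $(0,i,j)$.
\item Otherwise, if $k_1=a-s>j$, take $(i,j,k_1)$, which has sum $a$ and pivot $xy$.
\item Otherwise, if $i<k_2<j$, take $(i,k_2,j)$, which has sum $a-1$ and pivot $xz$. The exception is $j=d$ with $i<L$, where instead:
\item if $j<k_2=d$ and $i<L$, take $(i,j,d)$ through the special rule.
\end{enumerate}
We then verify two things. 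First, these rules are mutually exclusive. Second, whenever none applies, the edge is forced into $T_{d,a}\cup F_{d,a}$. Typically the only remaining completion is $k_2<i$, and then $ij\in F_{d,a}$ with $k=a-2-s$ after accounting for the sum-$(a-1)$ versus sum-$a$ shift; otherwise the edge is some $id$ with $i<L$.

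Composing $p$ with this rule in both directions then gives the bijection.

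\textbf{Main obstacle.} The hard part is the boundary bookkeeping where the upper bound $u$ (or $d$) and the lower bound $L$ cut off completions. The crucial cases are $k_1>u$, and the interaction between the tree edges $id$ and the special pivot line. We would handle these by splitting into $L=0$ and $L>0$ (where $u=d$) and treating $j=d$ separately in each. As a sanity check we would count: $|E|-|T|-|F|$ should equal the number of faces. We would verify this on small $(d,a)$ first, to make sure the off-by-one conventions in \eqref{eq:v2-type-free-edges} match the pivot rules before writing the general case analysis.
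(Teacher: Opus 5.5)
Your strategy is the paper's: construct the inverse of $p$ by cases. Your four rules are exactly the paper's four inverse cases. Since $k_1=k_2+1$, the condition $k_1>j$ is equivalent to $k_2\ge j$, so rules 2 and 3 are the paper's split on whether $k+1<j$, with $k=a-2-i-j$. As stated, though, the rules break at precisely the boundary you flag.

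\emph{Rule order.} Suppose $L>0$ and $i+j=L$ with $i\ge1$. Then $k_1=a-L=d+1>j$, so rule 2 fires and returns $(i,j,d+1)$, which is not a face, and rule 4 is never reached. You must either add $k_1\le u$ to rule 2 or, as the paper does, handle $i+j=L$ before the generic cases. You should also note that once $i+j\le a-2$, one has $k_1>u$ only when $i+j=L$. A small-case check would not expose this. Under $a\le(3d+1)/2$, an edge with $1\le i<j$ and $i+j=L$ needs $L\ge3$, hence $d\ge7$. For example, take $d=7$, $a=11$ and the edge $12$, whose preimage is the face $(1,2,7)$.

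\emph{The exception in rule 3.} It conflates two situations. An edge $id$ with $i<L$ lies in $T_{d,a}$ and needs no preimage. Indeed, each face through it has largest type $d$ and least type in $[1,L-1]$, so the sum-$a$ line or the special line of \eqref{eq:v2-type-pivot} selects the edge avoiding $d$. Rule 4, by contrast, concerns $k_2=d>j$.

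\emph{Fall-through condition.} It is $k_2\le i$, not $k_2<i$. When $k_2=i$, neither rule 2 nor rule 3 applies, and the edge lies in $F_{d,a}$ because $k=i-1$. The free edge $23$ for $(d,a)=(5,8)$ is of this kind.

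\emph{Exclusion of $F_{d,a}$.} You only gesture at this, but it is a one-line check. Let $s$ be the sum of the endpoints of the pivot. On the four lines of \eqref{eq:v2-type-pivot}, the number $a-2-s$ equals $-1$ or $-2$, then $z-2$, $d-1$ and $y-1$. The first is negative, and the others are at least $x$, the smaller endpoint of the pivot on those lines. Hence no pivot lies in $F_{d,a}$.

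With these repairs, and the check that composing $p$ with your rule gives the identity in both orders, your argument becomes the paper's proof.
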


\begin{proof}
We describe the inverse.  Let $ij$ be an edge outside
$T_{d,a}\cup F_{d,a}$, with $i<j$.  If $i+j\in\{a-1,a\}$, its inverse face
is $(0,i,j)$.  If $i+j=L$, its inverse face is $(i,j,d)$.

In every remaining case, $L<i+j\leq a-2$ and $i>0$.  Put
$k=a-2-i-j$.  Since $ij\notin F_{d,a}$, one has $k\geq i$.  If $k+1<j$, the
inverse face is $(i,k+1,j)$; otherwise it is $(i,j,k+2)$.  The latter is a
valid face because $i+j>L$ implies $k+2\leq d$.  The four inverse cases are
disjoint, and substitution in \eqref{eq:v2-type-pivot} gives back $ij$.
\end{proof}

\begin{lemma}[Type-complex collapse]\label{lem:v2-type-collapse}
The complex $Y_{d,a}$ collapses by a sequence of elementary collapses to the
graph $T_{d,a}\cup F_{d,a}$.
\end{lemma}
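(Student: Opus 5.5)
We will build the collapse from the pivot matching of Lemma~\ref{lem:v2-type-pivot}. Each face $f$ of $Y_{d,a}$ is paired with the edge $p(f)\subset f$, and all edges outside $T_{d,a}\cup F_{d,a}$ are used exactly once. It is therefore enough to order the faces $f_1,\dots,f_N$ so that, at step $r$, the edge $p(f_r)$ is a free face of $f_r$ in the complex that remains. Concretely, we need that no later face $f_s$ ($s>r$) contains $p(f_r)$. Removing the pair $\{f_r,p(f_r)\}$ is then an elementary collapse. Once all faces are gone, the remaining edges are exactly $T_{d,a}\cup F_{d,a}$ together with all vertices, which is the claim. In discrete Morse language, we must show that the matching $f\mapsto p(f)$ is acyclic, i.e.\ that the directed graph on faces with $f\to g$ whenever $p(f)\subset g$ and $g\ne f$ has no directed cycles. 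Any topological order of the reversed relation then gives the collapse sequence.

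\textbf{Structure of the faces through an edge.} By Proposition~\ref{prop:v2-type-feasibility}, the faces containing an edge $ij$ are the triples $\{i,j,k\}$ with $k\notin\{i,j\}$ and $k\in\{a-1-i-j,\,a-i-j\}$ in the range $0\le k\le u$. So each edge lies in at most two faces, and the relation $f\to g$ has out-degree at most one. Hence the only obstructions are directed cycles, and each such cycle is a path in which consecutive faces share a pivot edge. For every pivot edge we need a monotone statistic showing that the other face through it is ``earlier''. I would take the primary key to be the face sum $\sigma(f)=x+y+z\in\{a-1,a\}$, and the secondary key to be the smallest vertex $x$ or the pivot edge's sum.

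\textbf{Case check, following \eqref{eq:v2-type-pivot}.} The four cases are handled as follows.
\begin{itemize}
\item[(i)] $x=0$, pivot $yz$. The other face through $yz$ is $\{y,z,k'\}$ with $k'=a-1-y-z$ or $a-y-z$, and $k'\ne 0$. If $k'=1$ it is face $(1,y,z)$ of the other sum. Here I plan to order the faces by increasing $x$, so that $x=0$ faces are collapsed last, or first, depending on which direction the arrows force.
\item[(ii)] $x>0$ with sum $a$, pivot $xy$. The other face is $\{x,y,z-1\}$ with sum $a-1$, or it does not exist.
\item[(iii)] The exceptional $z=d$, $x<L$ case.
\item[(iv)] The generic case, pivot $xz$ with sum $a-1$. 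The other face through $xz$ is $\{x,z,y+1\}$ of sum $a$, whose own pivot is $x\cdot\min(y+1,z)$.
\end{itemize}
Tracing these, the arrows move within a fixed smallest vertex $x$ by changing the middle vertex monotonically. A good total potential looks like
\[
\Phi(f)=(x,\;2y+[\sigma(f)=a]),
\]
ordered lexicographically, and I would verify that every arrow $f\to g$ strictly increases (or strictly decreases) $\Phi$. The exceptional case (iii) and the cases where the other face has a smaller minimum vertex (for example $k'<x$) need separate checks. Those are exactly why the tree $T_{d,a}$ uses the vertex $d$ when $L>0$.

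\textbf{Main obstacle.} The main obstacle is this acyclicity verification: choosing a potential that handles all four pivot cases simultaneously, including the boundary interactions at $z=d$, $i+j=L$, and faces containing $0$. If the lexicographic potential fails, I would instead peel the faces in layers: first all faces containing $0$, which are free along $yz$ once the higher faces are gone; then the faces with sum $a$ in order of decreasing $y$; then the remaining faces with sum $a-1$. At each layer I would check directly, using the two-faces-per-edge structure, that the pivot edge is not contained in any face still present.
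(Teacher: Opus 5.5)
\textbf{There is a genuine gap: the acyclicity step fails as you set it up.} Your overall strategy matches the paper's. You take the pivot bijection of Lemma~\ref{lem:v2-type-pivot} as a matching, note that every edge lies in at most two faces, and reduce the collapse to acyclicity of the face dependency relation. Once the pivot lemma is granted, however, acyclicity is the whole content of this lemma, and neither of your concrete proposals proves it.

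The potential $(x,\,2y+[\sigma(f)=a])$ is not monotone in either direction:
\begin{itemize}
\item In case (ii), the arrow from a sum-$a$ face $(x,y,z)$ to $(x,y,z-1)$ lowers the second coordinate from $2y+1$ to $2y$.
\item In case (iv), the arrow from a sum-$(a-1)$ face $(x,y,z)$ to $(x,y+1,z)$ raises it from $2y$ to $2y+3$.
\end{itemize}
Concretely, take $d=6$, $a=9$ (so $L=2$). The chain $(1,2,6)\to(1,2,5)\to(1,3,5)\to(1,3,4)$, with pivots $12,15,13,14$, has values $(1,5),(1,4),(1,7),(1,6)$.

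The fallback layering also fails, for two reasons:
\begin{itemize}
\item Faces through $0$ cannot be removed first. A sum-$(a-1)$ face $(0,y,z)$ with $y\geq2$ shares its pivot $yz$ with $(1,y,z)$.
\item Sum-$a$ faces cannot all precede sum-$(a-1)$ faces. In the example above, the pivot $13$ of $(1,3,5)$ also lies in $(1,3,4)$.
\end{itemize}
Dependency chains alternate between the two sums, so no ordering by sum layers can work.

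\textbf{The missing idea is a potential adapted to that alternation.} The paper uses $\Phi(x,y,z)=z-y-x$. Along each of your arrows it drops by exactly one:
\begin{itemize}
\item case (ii): $(x,y,z)\mapsto(x,y,z-1)$;
\item case (iv): $(x,y,z)\mapsto(x,y+1,z)$;
\item case (i): $(0,y,z)\mapsto(1,y,z)$.
\end{itemize}
In the endpoint case (iii), the only competitor would need type $d+1$, so there is no arrow.

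Alternatively, your lexicographic key works once the tie-breaker is flipped to $[\sigma(f)=a-1]$. The key $(x,\,2y+[\sigma(f)=a-1])$ then strictly increases along every arrow: $0\to1$ in the first coordinate in case (i), $2y\to2y+1$ in case (ii), and $2y+1\to2y+2$ in case (iv).

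Your concern about competitors with a smaller minimum type is unfounded. The competitor has minimum type $x$ in cases (ii) and (iv), and minimum type $1>0$ in case (i).
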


\begin{proof}
Draw the dependency graph on the set of faces, with an arrow $g\to f$ when
$g\neq f$ also contains the pivot $p(f)$.  An edge belongs to at most two
faces, since its third type is determined by a total of $a-1$ or $a$.  Define
\[
  \Phi(x,y,z)=z-y-x.
\]
The value of $\Phi$ strictly increases along every arrow.  For a sum-$a$ face
with pivot $xy$, the other face is $(x,y,z-1)$.  For a sum-$(a-1)$ face with
generic pivot $xz$, it is $(x,y+1,z)$.  When $x=0$ and the pivot is $yz$, the
only possible other face is $(1,y,z)$.  In each case the increase is one.  In
the endpoint case of \eqref{eq:v2-type-pivot}, the other face would require
type $d+1$.

The dependency graph is therefore acyclic.  Remove faces in a topological
order.  When $f$ is removed, every other face containing $p(f)$ is already
gone, so $(f,p(f))$ is an elementary collapse.
Lemma~\ref{lem:v2-type-pivot} shows that the remaining edges are exactly
$T_{d,a}\cup F_{d,a}$.
\end{proof}

For coefficient extraction we use the convention
$[q^m]f(q)=0$ when $m<0$.

\begin{theorem}[Canonical type coordinates]\label{thm:v2-type-classification}
Let $d\geq3$ and $a'=\min(a,3d+1-a)$.  The admissible loopless
type-invariant binary voltage assignments, modulo gauge, have canonical
coordinates indexed by
\[
  0\leq k<i<j\leq d-1,\qquad k+i+j=a'-2.
\]
The quotient space has dimension
\begin{equation}\label{eq:v2-type-dimension}
\dim H^1(Y_{d,a'};\mathbb F_2)
=|F_{d,a'}|
=[q^{a'-5}]\binom{d}{3}_q.
\end{equation}
Moreover, two type-invariant voltages that differ by a coboundary on the full
Kneser graph already differ by a type coboundary.
\end{theorem}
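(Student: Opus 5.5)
Throughout, work with the normalized block size: complementing $A$ sends type $i$ to $d-i$, which maps the type complex of $A$ isomorphically onto that of $\Omega\setminus A$ and commutes with type switching, so the quotient space for $a$ equals the one for $a'$ and we may assume $a=a'\leq(3d+1)/2$. The plan is then as follows.

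\textbf{Step 1 (cohomology of the collapsed graph).} By Proposition~\ref{prop:v2-type-triangle-reduction} the quotient space is $H^1(Y_{d,a};\mathbb F_2)$. Lemma~\ref{lem:v2-type-collapse} collapses $Y_{d,a}$ onto the graph $T_{d,a}\cup F_{d,a}$, and collapses preserve cohomology. Since $T_{d,a}$ is a spanning tree, the first cohomology of this graph has dimension $|F_{d,a}|$.

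\textbf{Step 2 (canonical coordinates).} To make the coordinates explicit, gauge-fix every cocycle so that it vanishes on $T_{d,a}$; this is possible and unique because $T_{d,a}$ is a spanning tree. Then proceed backwards through the collapse order. When the face $f$ is removed, the cocycle value on its pivot $p(f)$ is determined by its values on the other two edges of $f$, and those edges survive longer in the collapse. Hence the free values on $F_{d,a}$ determine everything, and every choice of them extends to a cocycle. So the values on $F_{d,a}$ are coordinates, indexed by triples $k<i<j\leq d-1$ with $k=a-2-i-j$.

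\textbf{Step 3 (the $q$-binomial count).} Count triples $0\leq k<i<j\leq d-1$ with $k+i+j=m$, where $m=a-2$. Substituting $k=c_1$, $i=c_2+1$, $j=c_3+2$ with $0\leq c_1\leq c_2\leq c_3\leq d-3$ turns this into counting partitions of $m-3$ into at most $3$ parts with each part at most $d-3$. The generating function for such partitions is $\binom{d}{3}_q$, so the count is $[q^{a-5}]\binom{d}{3}_q$. When $a-5<0$ the set is empty, which matches the stated convention for negative exponents.

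\textbf{Step 4 (coboundary statement).} Suppose $\alpha_B-\alpha_{B'}=\delta f$ on $K(3d+1,d)$ with $f$ a vertex function. The difference is a type cocycle, and the task is to show it is a type coboundary. By Step 2 it suffices to show that its canonical coordinates vanish. Each coordinate edge $ij\in F_{d,a}$ closes a cycle in $T_{d,a}\cup\{ij\}$, and the value of a tree-normalized cocycle on $ij$ is the voltage of that cycle in the type graph. I would realize this type cycle as a closed walk in the Kneser graph, choosing consecutive vertices with the prescribed types; this is feasible by Proposition~\ref{prop:v2-type-feasibility}. A coboundary has zero voltage on every closed walk, so every coordinate is zero. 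The lifting needs care, because the closed walk must return to the same vertex, not merely to the same type. To close it, I would append a walk back to the starting vertex that stays inside one type class plus loopless detours. Its voltage must be shown to be zero, for example by going out and back along the same tree edges so that the contributions cancel in pairs. This lifting argument is the main obstacle, and it requires checking that each type class together with its tree-neighbors is connected in $G$.
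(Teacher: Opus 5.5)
Your Steps 1--3 match the paper's proof: the collapse of $Y_{d,a'}$ onto $T_{d,a'}\cup F_{d,a'}$, gauge-fixing on the spanning tree, and the staircase substitution that produces $\binom{d}{3}_q$.

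For the final assertion you take a different route. You lift fundamental cycles of the type graph to closed walks in $K(3d+1,d)$, which in effect shows that $H_1(G;\mathbb F_2)\to H_1(Y_{d,a'};\mathbb F_2)$ is onto. The paper argues by symmetry instead. Put $H=S_A\times S_{\Omega\setminus A}$ and suppose an $H$-invariant voltage equals $\delta f$. Connectedness of the base makes $f(hS)+f(S)$ a constant $\chi(h)$, and $\chi:H\to\mathbb F_2$ is a character that vanishes on vertex stabilizers. The stabilizer of a type-$0$ vertex contains $S_A$ and a copy of $S_d$ inside $S_{\Omega\setminus A}$, so both sign characters are excluded because $d\geq3$. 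Hence $f$ is constant on $H$-orbits, which are the type classes. That argument needs no walks, no case analysis, and not even looplessness. Yours is more hands-on and would give explicit detecting walks, in the spirit of the walk used for the uniform family.

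As written, however, Step 4 stops where its content lies. The connectivity you say ``requires checking'' is never checked. Taken literally, as an induced subgraph on a class and its tree-neighbor classes, it is also not the right condition. Such a subgraph can contain edges between two different tree neighbors, for example type-$12$ edges between two leaves of the star when $L=0$, and these non-tree edges spoil the pairwise cancellation.

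What you need is that the spanning subgraph $G_T\subseteq G$ keeping only edges whose type pair is a repeated type or an edge of $T_{d,a}$ is connected. A walk in $G_T$ between two same-type vertices $S,S'$ projects to a closed walk in a tree plus loops, so its voltage is zero. Since that voltage also equals $f(S)+f(S')$, this shows directly that $f$ is a type function. Fundamental cycles are then not needed at all.

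The connectivity holds, but it needs a case check.
\begin{itemize}
\item If $L>0$: types $0$ and $d$ are completely joined. A type-$j$ vertex $S$ with $j\geq L$ has a type-$0$ neighbor because $|A^c\setminus S|=2d+1-a+j\geq d$. A type-$i$ vertex with $i<L$ has a type-$d$ neighbor because $a-i\geq d$.
\item If $L=0$ and $a\leq d$: the type-$0$ vertices span $K(3d+1-a,d)$ with $3d+1-a\geq 2d+1$, which is connected.
\item If $a=d+1$: the type-$0$ vertices span $K(2d,d)$, which is a perfect matching, so you must route through type $1$. Two type-$0$ sets $S,S'$ with $|S\cup S'|=d+1$ have the common type-$1$ neighbor $\{x\}\cup(A^c\setminus(S\cup S'))$ with $x\in A$.
\end{itemize}
Finally, extending a walk from a prescribed vertex along a given type edge uses transitivity of $H$ on each type class. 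Proposition~\ref{prop:v2-type-feasibility} alone is not enough for that.
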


\begin{proof}
Lemma~\ref{lem:v2-type-collapse} reduces the complex to a spanning tree with
the edges $F_{d,a'}$ adjoined.  Switching uniquely kills the tree values and
leaves arbitrary values on those free edges.  These are the asserted
canonical coordinates.

The Gaussian-binomial identity
\[
  \sum_{0\leq k<i<j\leq d-1}q^{k+i+j}=q^3\binom{d}{3}_q
\]
then proves \eqref{eq:v2-type-dimension} by taking the coefficient of
$q^{a'-2}$.

For the last assertion, let $H=S_A\times S_{\Omega\setminus A}$ and suppose an
$H$-invariant voltage is $\delta f$.  For every $h\in H$, connectedness of the
base implies that $f(hS)+f(S)$ is constant in $S$; these constants define a
character of $H$ that is trivial on every vertex stabilizer.  The $H$-orbits
on $V(G)$ are exactly the type classes, since the two symmetric-group factors
act transitively on subsets of each prescribed cardinality.  A type-zero
vertex exists after complement normalization.  Its vertex stabilizer
contains the full $S_A$ factor and an $S_d$ subgroup of the other factor.
Every homomorphism $S_m\to\mathbb F_2$ is trivial or the sign character when
$m\geq2$, and is trivial when $m\leq1$.  Thus neither sign character can
occur: the first is nontrivial on $S_A$ when $a'\geq2$ (and absent when
$a'\leq1$), while the second is nontrivial on $S_d$ because $d\geq3$.  The
character is trivial, so $f$ is $H$-invariant and hence is a type function.
\end{proof}

\begin{corollary}[Number of type classes]\label{cor:v2-type-count}
For every $d\geq3$ and every block size $a$, there are $2^m$ gauge classes,
of which $2^m-1$ yield connected lifts, where
\[
  m=[q^{a'-5}]\binom{d}{3}_q,
  \qquad a'=\min(a,3d+1-a).
\]
In the stable range $d\geq a'\geq5$,
\[
  m=\operatorname{round}\left(\frac{(a'-2)^2}{12}\right).
\]
\end{corollary}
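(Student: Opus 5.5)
The plan is to read the count off Theorem~\ref{thm:v2-type-classification}, get the connectedness statement from the last assertion of that theorem together with Lemma~\ref{lem:v2-connected-lift}, and then evaluate the coefficient as a partition count.

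\textbf{The number of classes.} By Theorem~\ref{thm:v2-type-classification}, the gauge classes form an $\mathbb F_2$-space of dimension $m=|F_{d,a'}|=[q^{a'-5}]\binom{d}{3}_q$. Hence there are exactly $2^m$ classes. Complementing $A$ replaces type $i$ by $d-i$, so this reduction to $a'$ covers every block size $a$.

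\textbf{Connectedness.} The zero class contains the zero voltage, whose lift is $G\sqcup G$. Now take a nonzero type class with representative $\alpha$. Suppose $\alpha$ were a coboundary $\delta f$ on the full Kneser graph. By the final assertion of Theorem~\ref{thm:v2-type-classification}, $\alpha$ and $0$ would then already differ by a type coboundary. That would put $\alpha$ in the zero type class, a contradiction. So $\alpha$ is not a coboundary, and Lemma~\ref{lem:v2-connected-lift} shows the lift is connected. This gives exactly $2^m-1$ connected lifts.

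\textbf{The stable-range formula.} By the canonical coordinates, $m$ counts the triples $0\le k<i<j\le d-1$ with $k+i+j=N$, where $N=a'-2$.
\begin{enumerate}
\item Drop the upper bound. Since $k\ge0$ and $i\ge1$, every such triple has $j\le N-1=a'-3\le d-3$. So in the range $a'\le d$ the condition $j\le d-1$ is automatic.
\item Count triples of distinct nonnegative integers. The shift $(k,i,j)\mapsto(k,i-1,j-2)$ is a bijection onto triples $0\le k\le i-1\le j-2$ with sum $N-3$. These are the partitions of $N-3$ into at most three parts.
\item Apply the classical formula $p_{\le3}(n)=\operatorname{round}\bigl((n+3)^2/12\bigr)$, which can be proved by partial fractions of $1/((1-q)(1-q^2)(1-q^3))$ or quoted. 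With $n=N-3$ this gives $m=\operatorname{round}\bigl((a'-2)^2/12\bigr)$.
\end{enumerate}
The hypothesis $a'\ge5$ only ensures $n\ge0$.

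The only point needing care is the first step of this computation, namely that the upper bound $j\le d-1$ never binds in the stable range. Once that is checked, the rest is the standard partition identity.
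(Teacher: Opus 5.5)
Your proposal is correct and follows the paper's proof essentially step for step. The count of $2^m$ classes comes from Theorem~\ref{thm:v2-type-classification}, and connectedness comes from that theorem's final assertion combined with Lemma~\ref{lem:v2-connected-lift}. The stable-range formula comes from dropping the inactive bound $j\le d-1$ and subtracting the staircase $(0,1,2)$, which leaves partitions of $a'-5$ into at most three parts. You supply a bit more explicit detail than the paper does, namely the bound $j\le a'-3\le d-3$ and the shift map.
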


\begin{proof}
By the last assertion of Theorem~\ref{thm:v2-type-classification}, a type
class is nonzero exactly when its voltage is not a coboundary on the full
Kneser graph.  Lemma~\ref{lem:v2-connected-lift} therefore gives the stated
connected-cover count.  When $d\geq a'$, the
upper bound in \eqref{eq:v2-type-free-edges} is inactive, and the free triples
are partitions of
$a'-5$ into at most three parts after subtracting the staircase $(0,1,2)$.
The displayed nearest-integer expression is the standard formula for this
partition number.
\end{proof}

\section{The Uniform Family}\label{sec:v2-uniform-family}

\begin{theorem}[Uniform family]\label{thm:v2-uniform-family}
For every $d\geq3$, there is a connected graph on
$2\binom{3d+1}{d}$ vertices that is locally $K(2d+1,d)$.
\end{theorem}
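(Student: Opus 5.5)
We will realize the graph as a binary lift $\widetilde G_\alpha$ of the base $G=K(3d+1,d)$. Every open neighborhood in $G$ is a copy of $K(2d+1,d)$, because the neighbors of a $d$-set $S$ are exactly the $d$-subsets of $\Omega\setminus S$, which has $2d+1$ points. The lift has $2\binom{3d+1}{d}$ vertices. By Proposition~\ref{prop:v2-triangle-criterion}, if $\alpha$ has zero total voltage on every triangle, then every open neighborhood of the lift is isomorphic to the corresponding neighborhood in $G$, so the lift is locally $K(2d+1,d)$. By Lemma~\ref{lem:v2-connected-lift}, the lift is connected exactly when $\alpha$ is not a coboundary on $G$. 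It therefore suffices to produce, for every $d\geq3$, one admissible voltage that is not a coboundary on the full Kneser graph.

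We take this voltage from the type classification. By Theorem~\ref{thm:v2-type-classification} and Corollary~\ref{cor:v2-type-count}, for a block $A$ of size $a$ there are $2^m-1$ connected type-invariant lifts, where $m=|F_{d,a'}|$. So we only need a block size with $F_{d,a'}\neq\emptyset$, and we choose $a=5$. Then $a'=\min(5,3d-4)=5$ because $d\geq3$. The triple $(k,i,j)=(0,1,2)$ satisfies $0\leq k<i<j\leq d-1$, since $d-1\geq2$, and $k+i+j=3=a'-2$. Hence $m\geq1$; in fact $m=[q^0]\binom{d}{3}_q=1$. Let $B$ be the auxiliary graph obtained by putting voltage $1$ on the single free edge $12\in F_{d,5}$ and $0$ on the tree edges. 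By Proposition~\ref{prop:v2-type-triangle-reduction} and Lemma~\ref{lem:v2-type-collapse}, this choice extends uniquely to a cocycle on $Y_{d,5}$ that represents the nonzero class.

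The last assertion of Theorem~\ref{thm:v2-type-classification} says that a type-invariant voltage which is a coboundary on the full Kneser graph is already a type coboundary. Since our class is nonzero in $H^1(Y_{d,5};\mathbb F_2)$, the voltage $\alpha_B$ is not a coboundary on $G$. Lemma~\ref{lem:v2-connected-lift} then shows that $\widetilde G_{\alpha_B}$ is connected.

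The main obstacle is the claim that a nonzero type class gives a connected lift, which requires the character argument from the classification theorem. That argument needs a type-zero vertex and uses $d\geq3$, and both hold here since $a=5\leq(3d+1)/2$ for $d\geq3$. As an independent check, we can exhibit a closed walk of voltage $1$ directly. The cocycle on the $1$-skeleton has value $1$ on the cycle $0\to1\to2\to0$ of $Y_{d,5}$: the tree edges $01$ and $02$ carry $0$ and the free edge $12$ carries $1$. This cycle can be realized in $G$ by disjoint pairs of $d$-sets of types $(0,1)$, $(1,2)$ and $(2,0)$. Since $12$ is not part of any face, the cycle is not a boundary in $Y_{d,5}$, which confirms the nonvanishing of the class.
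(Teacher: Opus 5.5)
Your proof is correct, but it takes a different route from the paper's. You use the same block size $|A|=5$. Through unique extension from $T_{d,5}\cup F_{d,5}$ you arrive at the same voltage, the indicator of the type pair $12$. You get the cocycle property from the collapse in Lemma~\ref{lem:v2-type-collapse} together with Proposition~\ref{prop:v2-type-triangle-reduction}. You get connectedness from the character argument in the last assertion of Theorem~\ref{thm:v2-type-classification}.

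The paper instead checks both facts by hand. A parity count shows that the pair $\{1,2\}$ occurs an even number of times in every type multiset of sum $4$ or $5$, including the repeated-type triangles $\{1,1,2\}$ and $\{1,2,2\}$. Connectedness comes from an explicit closed walk $S_0S_1S_2S_3S_0$ with type sequence $3,2,2,1,3$ and voltage $1$. So the paper's proof needs neither the pivot matching nor the group-theoretic transfer from full-graph coboundaries to type coboundaries.

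What each buys: your route shows the family as the first nonzero layer $a'=5$ of the classification, which the paper only remarks on afterwards. The paper's route is self-contained and gives a concrete witness of connectedness.

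One caution about your supplementary check, which is not load-bearing:
\begin{itemize}
\item A closed walk in $G$ with type sequence exactly $0,1,2,0$ would be a triangle of type sum $3\notin\{4,5\}$, and no such triangle exists.
\item For $d=3$ there are no edges between two type-$0$ vertices either.
\item So realizing the cycle needs same-type steps, for example a walk with types $0,1,1,2,0$.
\item The observation that $12$ lies in no face of $Y_{d,5}$ only shows $[\alpha_B]\neq0$ in $H^1(Y_{d,5};\mathbb F_2)$. It does not show nonvanishing in $H^1(X(G);\mathbb F_2)$, which is what connectedness requires.
\end{itemize}
Your main argument already supplies the nonvanishing in $H^1(X(G);\mathbb F_2)$, so this does not affect correctness.
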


\begin{proof}
Take $|A|=5$ and let the auxiliary graph have the single edge $12$.  By
Proposition~\ref{prop:v2-type-feasibility}, the types on a triangle sum to
four or five.  Among three nonnegative integers of either sum, the number of
pairs equal to $\{1,2\}$ is even: it is two for $\{1,1,2\}$ and
$\{1,2,2\}$, and zero otherwise.  Every triangle therefore has zero total
voltage, so Proposition~\ref{prop:v2-triangle-criterion} shows that the lift
preserves each local graph.  The open neighborhood of a vertex in the base
$K(3d+1,d)$ is $K(2d+1,d)$.

For connectedness, label $A=\{0,1,2,3,4\}$ and choose disjoint
$(d-3)$-subsets $P,Q$ outside $\{0,1,\ldots,7\}$.  The vertices
\[
\begin{aligned}
S_0&=\{0,1,2\}\cup P,&S_1&=\{3,4,5\}\cup Q,\\
S_2&=\{0,1,6\}\cup P,&S_3&=\{3,5,7\}\cup Q
\end{aligned}
\]
form a closed walk.  Its type sequence is $3,2,2,1,3$, and its voltage
sequence is $0,0,1,0$.  Hence the voltage class is nonzero, and
Lemma~\ref{lem:v2-connected-lift} makes the lift connected.  The base has
$\binom{3d+1}{d}$ vertices, so the lift has the stated order.
\end{proof}

For $a'=5$, the coefficient in
Theorem~\ref{thm:v2-type-classification} is one and the unique free edge is
$12$.  Thus Theorem~\ref{thm:v2-uniform-family} is the first nonzero layer of
the classification, not a separate exceptional construction.

\subsection{Two small cases}

For $d=3$ and $a=5$, the free set is $F_{3,5}=\{12\}$.  The explicit
240-vertex representative constructed later in this paper uses auxiliary
edges $01,13$.  Its symmetric difference with $12$ is $\{01,12,13\}$, the
cut at type 1.  The two representatives are therefore gauge equivalent as
fixed-base covers.  In particular, Theorem~\ref{thm:v2-uniform-family}
recovers that connected graph locally $K(7,3)$.
This lift is $35$-regular and therefore has $4,200$ edges.  Its
$240$-vertex order distinguishes it from the standard $K(10,3)$, which has
$120$ vertices.

Brouwer's $d=5$ example takes $a=8$ and
\[
  E(B)=\{02,05,12,15,24,35\}.
\]
The six face types are
\[
  025,\quad034,\quad035,\quad124,\quad125,\quad134.
\]
Each has even $B$-edge parity.  The canonical tree is
\[
  \{02,03,04,05,15\},
\]
and the free edges are $23,24$.  Switching along the tree gives canonical
coordinate $(1,0)$.  Thus this example represents one of the three nonzero
gauge classes in the two-dimensional space.

\section[Fixed-base Cohomology for K(10,3)]
{Fixed-base Cohomology for $K(10,3)$}
\label{sec:v2-k103-cohomology}

The type-complex theory classifies voltages that depend only on intersection
types with a fixed block.  In this section we drop the type-invariant
restriction and now consider all binary voltages on the
fixed labeled $K(10,3)$.

\subsection{The fixed-base correspondence}

Write $M_3(10)=X(K(10,3))$ for the clique complex of the base, and use
simplicial cochains with coefficients in $\mathbb F_2$.  Thus
$C^0=C^0(M_3(10);\mathbb F_2)$ consists of vertex functions, while
$C^1=C^1(M_3(10);\mathbb F_2)$ consists of functions on the unoriented
edges.  Put
\[
  Z^1=\ker\delta^1,
  \qquad
  B^1=\operatorname{im}\delta^0.
\]
The equation $\delta^1\alpha=0$ says exactly that the voltage sum on the
boundary of each triangle is zero.

\begin{theorem}[Fixed-base classification]
\label{thm:v2-fixed-base-classification}
The gauge-equivalence classes of binary voltages on the fixed labeled base
$K(10,3)$ whose associated projections induce graph isomorphisms on every
induced open neighborhood are naturally in bijection with
\[
  H^1(M_3(10);\mathbb F_2)=Z^1/B^1,
  \qquad
  Z^1=\ker\delta^1,
  \quad
  B^1=\operatorname{im}\delta^0,
\]
where $C^0$ and $C^1$ are the simplicial cochain groups over $\mathbb F_2$.
\end{theorem}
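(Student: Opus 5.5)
This is the specialization of Corollary~\ref{cor:v2-general-cohomology-classification} to $G=K(10,3)$, so the plan is to check that corollary's hypotheses and identify its pieces with the cochain groups named in the statement. First, $K(10,3)$ is connected. This is the case $d=3$ of the observation in Section~\ref{sec:v2-type-complex}: for any two $3$-sets $S,T$, the complement of $S\cup T$ has at least $4$ points, so it contains a $3$-set adjacent to both. The clique complex $X(K(10,3))$ is by definition $M_3(10)$. It is in fact two-dimensional, since four pairwise disjoint $3$-sets cannot fit in $10$ points, but only its $2$-skeleton matters here.

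Next I would spell out the identifications. A binary voltage $\alpha:E(G)\to\mathbb F_2$ is literally an element of $C^1$, because the $1$-simplices of $M_3(10)$ are exactly the unoriented edges of the base. By Proposition~\ref{prop:v2-triangle-criterion}, the projection induces a graph isomorphism on every open neighborhood exactly when every triangle has zero voltage sum. The $2$-simplices of $M_3(10)$ are exactly the triangles of $G$, and over $\mathbb F_2$ orientations are irrelevant. So this condition is exactly $\delta^1\alpha=0$, that is, $\alpha\in Z^1$.

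Proposition~\ref{prop:v2-gauge-change} shows that fixed-base gauge equivalence, meaning an isomorphism over $G$ preserving fibers, is the relation $\alpha'-\alpha\in B^1=\operatorname{im}\delta^0$, in both directions. Because $\delta^1\delta^0=0$, the subspace $B^1$ lies inside $Z^1$. Hence gauge changes preserve admissibility, and the admissible voltages modulo gauge form $Z^1/B^1=H^1(M_3(10);\mathbb F_2)$. The bijection sends $\alpha$ to $[\alpha]$; it is well defined and injective by the converse half of Proposition~\ref{prop:v2-gauge-change}, and surjective because every cocycle is an admissible voltage.

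There is no real obstacle here. The only points needing care are that "gauge equivalence" means fiber-preserving isomorphism over the fixed labeled base, which is exactly what Proposition~\ref{prop:v2-gauge-change} covers, and that the naturality statement uses no choice of representatives. The computation $\dim H^1=42$ is not part of this statement and is deferred.
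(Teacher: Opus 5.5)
Your proposal is correct and matches the paper's proof, which simply applies Corollary~\ref{cor:v2-general-cohomology-classification} to $G=K(10,3)$ using $X(K(10,3))=M_3(10)$; your extra checks (connectedness of the base, $\delta^1\alpha=0$ as the triangle condition, $B^1\subseteq Z^1$) just unpack that corollary. One small slip: well-definedness of $\alpha\mapsto[\alpha]$ uses the converse half of Proposition~\ref{prop:v2-gauge-change}, but injectivity uses its forward half, not the converse.
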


\begin{proof}
Apply Corollary~\ref{cor:v2-general-cohomology-classification} to
$G=K(10,3)$.  Since $X(K(10,3))=M_3(10)$, the resulting quotient is
$Z^1/B^1$ as stated.
\end{proof}

\subsection{Connected classes}

The cohomology class also detects whether the two sheets belong to one
component.

\begin{corollary}[Connected cohomology classes]
\label{cor:v2-connected-cohomology-class}
For a cocycle $\alpha\in Z^1$ on $K(10,3)$, the lift
$\widetilde G_\alpha$ is connected if and only if $[\alpha]\ne0$ in
$H^1(M_3(10);\mathbb F_2)$.  The zero class gives the disjoint union of two
copies of $K(10,3)$.
\end{corollary}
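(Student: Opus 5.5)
This is a direct specialization of Lemma~\ref{lem:v2-connected-lift} and Corollary~\ref{cor:v2-general-cohomology-classification} to the base $G=K(10,3)$. The plan has three short steps.

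First, I will check the hypotheses. The base must be connected, which follows from the argument given at the start of Section~\ref{sec:v2-type-complex} with $d=3$. Indeed, for two $3$-sets $S,T\subseteq\Omega$ with $|\Omega|=10$, the complement of $S\cup T$ has at least $4$ points. It therefore contains a $3$-set adjacent to both, so $K(10,3)$ is connected with diameter at most two.

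Second, I will relate the cohomology class to coboundaries. Since $\alpha\in Z^1$, the statement $[\alpha]\neq0$ in $H^1(M_3(10);\mathbb F_2)=Z^1/B^1$ means exactly that $\alpha\notin B^1=\operatorname{im}\delta^0$, i.e.\ that $\alpha$ is not a coboundary $\delta f$ of a vertex function. Lemma~\ref{lem:v2-connected-lift} says that, for connected $G$, the lift $\widetilde G_\alpha$ is connected iff $\alpha$ is not a coboundary. Combining the two gives the first assertion. The cocycle hypothesis plays no role in the connectedness equivalence itself; it is there only so that the class is defined.

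Third, I will identify the zero class. Suppose $[\alpha]=0$, so $\alpha=\delta f$. By Proposition~\ref{prop:v2-gauge-change}, $\Phi_f$ gives an isomorphism over $G$ between $\widetilde G_\alpha$ and $\widetilde G_0$. For the zero voltage, $(u,\epsilon)\sim(v,\eta)$ iff $uv\in E(G)$ and $\eta=\epsilon$. Hence $\widetilde G_0$ is the disjoint union of the two sheets $V(G)\times\{0\}$ and $V(G)\times\{1\}$, each a copy of $K(10,3)$.

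I do not expect any real obstacle. The only point needing care is keeping the two notions of coboundary aligned: $B^1$ here is the simplicial image of $\delta^0$ on $M_3(10)$, and it coincides with the graph-theoretic coboundaries used in Lemma~\ref{lem:v2-connected-lift}. This holds because the $0$- and $1$-skeleta of the clique complex are exactly the vertices and edges of $K(10,3)$.
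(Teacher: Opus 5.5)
Your proposal is correct and takes essentially the paper's route: the paper proves this corollary in one line by specializing the connectedness assertion of Corollary~\ref{cor:v2-general-cohomology-classification}, which itself rests on Lemma~\ref{lem:v2-connected-lift}, to $G=K(10,3)$. Your check that the base is connected and your gauge-change identification of the zero class with two sheets simply write out what that specialization uses.
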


\begin{proof}
This is the connectedness assertion of
Corollary~\ref{cor:v2-general-cohomology-classification}, specialized to
$G=K(10,3)$.
\end{proof}

\subsection[The matching complex M3(10)]{The matching complex $M_3(10)$}

A simplex of $X(K(10,3))$ is a collection of pairwise disjoint three-subsets
of the ten-point ground set.  Consequently
\[
  X(K(10,3))=M_3(10),
\]
the 3-uniform matching complex.  Its numbers of vertices, edges, and
two-simplices are
\begin{align}
  f_0&=120=\binom{10}{3},\label{eq:v2-m310-f0}\\
  f_1&=2100=\frac{120\binom{7}{3}}{2},\label{eq:v2-m310-f1}\\
  f_2&=2800=
    \frac{\binom{10}{3}\binom{7}{3}\binom{4}{3}}{3!}.
    \label{eq:v2-m310-f2}
\end{align}
The second line counts unordered disjoint pairs, and the third counts
unordered pairwise-disjoint triples.  Four disjoint three-subsets would use
twelve points, so there are no higher-dimensional simplices.  For the exact
calculation below, the canonical bases order the triples lexicographically,
then order the disjoint edges and the pairwise-disjoint triangles by the
induced lexicographic order.

Shareshian and Wachs computed the relevant homology over complex coefficients;
their result includes
\[
  \widetilde H_1(M_3(10);\mathbb C)\cong S^{(5,5)},
\]
whose dimension is 42 \cite{shareshianwachs2009top}.  This published
characteristic-zero value is useful context, but it does not determine the
binary rank or dimension over $\mathbb F_2$: a change of coefficients may
detect torsion.  The binary value used here is therefore established by a
separate exact computation.

\begin{proposition}\label{prop:v2-m310-binary-cohomology}
The first binary cohomology of the matching complex satisfies
\[
  \dim_{\mathbb F_2}H^1(M_3(10);\mathbb F_2)=42.
\]
\end{proposition}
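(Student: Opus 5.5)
Since $M_3(10)$ is connected (the base is connected, as noted in Section~\ref{sec:v2-type-complex}), $\dim B^1=f_0-1=119$. Hence
\[
  \dim H^1=\dim Z^1-\dim B^1=(f_1-\operatorname{rank}\delta^1)-(f_0-1).
\]
With $f_0=120$ and $f_1=2100$ from \eqref{eq:v2-m310-f0}--\eqref{eq:v2-m310-f1}, the claim is equivalent to
\[
  \operatorname{rank}_{\mathbb F_2}\delta^1=2100-119-42=1939.
\]
Here $\delta^1$ is the $2800\times2100$ triangle--edge incidence matrix over $\mathbb F_2$. Each row has exactly three ones. The rows and columns are indexed in the canonical lexicographic orders fixed above. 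The plan is to certify this single rank exactly and then read off the answer.

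To make the certificate checkable rather than a black-box elimination, I would first reduce the matrix by a collapse, in the spirit of Lemma~\ref{lem:v2-type-collapse}. Greedily find a sequence of elementary collapses $(\sigma,e)$, where $e$ is a free edge of the current triangle $\sigma$. Each collapse removes one row and one column without changing cohomology. Pair the 2800 triangles with edges by an acyclic matching found by a lexicographic greedy search. For instance, match each triangle $\{S,T,U\}$ with the edge between its two lexicographically largest blocks, and repair conflicts by checking acyclicity of the dependency digraph.

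The unmatched edges and triangles leave a much smaller Morse-type complex. Its Euler characteristic is fixed:
\[
  \chi=120-2100+2800=820=1-\beta_1+\beta_2,
\]
so $\beta_2-\beta_1=819$. Since $H^2$ is not controlled a priori, the residual computation is unavoidable. The aim is for the matching to leave exactly one critical vertex, 42 critical edges and 861 critical triangles, with the reduced boundary from critical triangles to critical edges equal to zero mod 2. This would give $\beta_1=42$ directly.

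If a clean acyclic matching with zero residual boundary is not found, I would fall back on exact Gaussian elimination of $\delta^1$ over $\mathbb F_2$, using bit-packed rows of 2100 bits. That is trivial at this size, and rank 1939 can be independently certified in two steps. First, exhibit 1939 rows that reduce to an echelon form. Second, exhibit 42 linearly independent cocycles outside $B^1$, checked against the 2800 triangle equations, together with a basis of $\ker(\delta^1)^{T}$ of dimension $861$ verified to annihilate every column.

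The main obstacle is that this is fundamentally a finite computation. The Shareshian--Wachs value $42$ over $\mathbb C$ gives only the lower bound $\dim_{\mathbb F_2}\ge 42$ by universal coefficients, and 2-torsion in $H_1(M_3(10);\mathbb Z)$ could raise it. So the upper bound, equivalently $\operatorname{rank}\delta^1\ge1939$, must be witnessed explicitly, for example by a nonsingular $1939\times1939$ minor in echelon form. I expect that certification step to carry the weight of the proof.
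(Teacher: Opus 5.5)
Your reduction is the paper's own: connectedness gives $\operatorname{rank}\delta^0=119$, so the claim is equivalent to $\operatorname{rank}_{\mathbb F_2}\delta^1=1939$. The paper establishes this rank by the same exact row reduction over $\mathbb F_2$ that you list as your fallback; the collapse/Morse-matching preprocessing is optional and the argument does not need it.

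Your remark that the Shareshian--Wachs value plus universal coefficients already gives $\dim H^1\geq42$ is correct. It yields $\operatorname{rank}\delta^1\leq1939$, so only $\operatorname{rank}\delta^1\geq1939$ needs an explicit witness, which is slightly more economical than the paper's full computation. One small fix in your redundant upper-bound certificate: the 42 cocycles must be linearly independent modulo $B^1$, not merely each lie outside $B^1$.
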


\begin{proof}
Since $K(10,3)$ is connected and has 120 vertices, the kernel of
$\delta^0:C^0\to C^1$ consists of the constant functions.  Thus
$\operatorname{rank}\delta^0=119$, without computation.

The coboundary matrices for $\delta^0$ and $\delta^1$ have dimensions
$2100\times120$ and $2800\times2100$, respectively.  Exact row reduction over
$\mathbb F_2$ gives
\[
  \operatorname{rank}\delta^0=119,
  \qquad
  \operatorname{rank}\delta^1=1939.
\]
The supplement reproduces the canonical finite incidence matrices and this
calculation.  The triangle-by-edge matrix for $\delta^1$ is the transpose of
the boundary matrix $\partial_2$, and the two have the same rank.
It follows that
\[
  \dim Z^1=2100-1939=161
\]
and therefore
\[
  \dim H^1=161-119=42.
\]
\end{proof}

\begin{corollary}[Number of fixed-base covers]
\label{cor:v2-fixed-base-cover-count}
Over the fixed labeled base $K(10,3)$ there are exactly
\[
  2^{42}=4{,}398{,}046{,}511{,}104
\]
gauge classes of local-neighborhood-preserving binary lifts.  Only the zero
class is disconnected, so exactly
\[
  2^{42}-1=4{,}398{,}046{,}511{,}103
\]
classes give connected lifts.  Every such binary lift has 240 vertices and is
locally $K(7,3)$.
\end{corollary}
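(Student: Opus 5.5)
The count follows by combining three earlier results. Theorem~\ref{thm:v2-fixed-base-classification} puts the gauge classes of local-neighborhood-preserving binary voltages on the fixed labeled $K(10,3)$ in bijection with $H^1(M_3(10);\mathbb F_2)$. Proposition~\ref{prop:v2-m310-binary-cohomology} gives this $\mathbb F_2$-space dimension $42$. Hence the number of classes is $|H^1|=2^{42}$, and the decimal value $4{,}398{,}046{,}511{,}104$ is routine arithmetic.

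For the connectedness statement, I would invoke Corollary~\ref{cor:v2-connected-cohomology-class}. For any representative cocycle, the lift is connected exactly when its class is nonzero. Connectedness is also well defined on gauge classes, since by Proposition~\ref{prop:v2-gauge-change} gauge-equivalent voltages give isomorphic lifts. So exactly one class, the zero class, gives $K(10,3)\sqcup K(10,3)$, and the remaining $2^{42}-1$ classes give connected lifts.

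Finally, every binary lift has vertex set $V(G)\times\mathbb F_2$ by Definition~\ref{def:v2-binary-voltage-lift}, hence $2\cdot120=240$ vertices. Because the voltage is a cocycle, Proposition~\ref{prop:v2-triangle-criterion} shows that projection restricts to an isomorphism from each open neighborhood in the lift onto the corresponding open neighborhood in $K(10,3)$. In $K(10,3)$ the neighborhood of a $3$-set $S$ consists of the $3$-subsets of the $7$-element complement $\Omega\setminus S$, with adjacency given by disjointness. This induced subgraph is $K(7,3)$, so every such lift is locally $K(7,3)$.

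No step is a real obstacle here. The only substantive input is the rank computation behind Proposition~\ref{prop:v2-m310-binary-cohomology}, which is taken as given, so the proof is bookkeeping among earlier results.
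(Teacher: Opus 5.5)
Your proposal is correct and follows essentially the same route as the paper: $2^{42}$ classes from the fixed-base classification and the rank computation, removal of the zero class via the connected-cohomology corollary, and the vertex count and local structure from the lift definition and the triangle criterion. Your explicit identification of each open neighborhood in $K(10,3)$ with $K(7,3)$ (the $3$-subsets of the $7$-point complement under disjointness) and your remark that connectedness is well defined on gauge classes spell out details the paper leaves implicit.
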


\begin{proof}
Proposition~\ref{prop:v2-m310-binary-cohomology} gives a 42-dimensional
vector space over $\mathbb F_2$, hence $2^{42}$ classes.  Apply
Corollary~\ref{cor:v2-connected-cohomology-class} to remove the zero class.
The vertex and local-graph assertions follow from the definition of a binary
lift and Proposition~\ref{prop:v2-triangle-criterion}.
\end{proof}

\section[Base Automorphisms and Symmetry Orbits for K(10,3)]
{Base Automorphisms and Symmetry Orbits for $K(10,3)$}
\label{sec:v2-k103-symmetry-orbits}

We now allow relabelings of the base graph.  This replaces the individual
elements of the cohomology group by orbits under the full automorphism group
of $K(10,3)$.  The calculation remains relative to the displayed projection
onto the base.

\subsection{The automorphism group of the base}

For $x\in\Omega=\{0,1,\ldots,9\}$, write
\[
  \mathcal S_x=\left\{T\in\binom{\Omega}{3}:x\in T\right\}
\]
for the point star at $x$.

\begin{theorem}[The Kneser base]
\label{thm:v2-aut-kneser}
The point action gives the full automorphism group:
\[
  \operatorname{Aut}(K(10,3))=S_{10}.
\]
\end{theorem}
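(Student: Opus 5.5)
The point action of $S_{10}$ on $\binom{\Omega}{3}$ preserves disjointness, so it embeds $S_{10}$ into $\operatorname{Aut}(K(10,3))$. The embedding is injective because a nontrivial permutation moves some point $x$. Choose $y$ with $\pi(x)=y\neq x$ and a $3$-set $T\ni x$ with $y\notin T$. Then $\pi(T)\ni y$, so $\pi(T)\neq T$. The substance of the proof is the reverse inclusion: every automorphism $\phi$ is induced by a point permutation. The plan is to recognize the point stars $\mathcal S_x$ purely graph-theoretically, so that $\phi$ must permute them. This defines a permutation $\pi$ of $\Omega$, and $\phi$ is then forced to agree with $\pi$.

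\textbf{Step 1: characterize stars as maximum cocliques.} A coclique of $K(10,3)$ is an intersecting family of $3$-subsets. Since $10\geq 2\cdot 3+1$, the Erd\H{o}s--Ko--Rado theorem gives maximum size $\binom{9}{2}=36$. Because $10>2\cdot3$, its uniqueness clause says the only families attaining this size are the stars $\mathcal S_x$. The automorphism $\phi$ maps maximum cocliques to maximum cocliques, so there is a map $\pi:\Omega\to\Omega$ with $\phi(\mathcal S_x)=\mathcal S_{\pi(x)}$. Distinct stars have distinct images since $\phi$ is a bijection on vertex sets, so $\pi$ is a permutation.

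\textbf{Step 2: recover vertices from stars.} A vertex $T$ lies in exactly the three stars $\mathcal S_x$ with $x\in T$, so $\{T\}=\bigcap_{x\in T}\mathcal S_x$. Applying $\phi$ gives
\[
\phi(T)\in\bigcap_{x\in T}\mathcal S_{\pi(x)}=\{\pi(T)\},
\]
hence $\phi(T)=\pi(T)$. Therefore $\phi$ is the point action of $\pi$, and $\operatorname{Aut}(K(10,3))=S_{10}$.

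\textbf{Main obstacle.} The only nontrivial input is the uniqueness part of Erd\H{o}s--Ko--Rado for $n>2k$; I would cite it rather than reprove it. If a self-contained argument is preferred, one can instead characterize stars locally. For this, first describe the common non-neighbourhood of two vertices $T,T'$ with $|T\cap T'|=2$. Then show that a maximal clique-free structure through such pairs spans a star. I expect this substitute to be messier, so citing EKR, as in the standard proof of $\operatorname{Aut}K(n,k)=S_n$ for $n>2k$, is the route I would take.
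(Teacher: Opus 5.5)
Your proposal is correct and follows essentially the same route as the paper: you identify the ten point stars as the maximum cocliques, let an automorphism permute them, and recover each triple as the intersection of the three stars containing it. The only cosmetic differences are that the paper derives the uniqueness of the maximum intersecting families from the Hilton--Milner bound ($22<36$) instead of quoting the uniqueness clause of Erd\H{o}s--Ko--Rado, and that it phrases your final step as triviality of the kernel of $\operatorname{Aut}(K(10,3))\to S_{10}$.
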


\begin{proof}
The natural action of $S_{10}$ preserves disjointness of triples and is
therefore an action by graph automorphisms.  It is faithful: a point $x$ is
the intersection of all triples that contain it, so a point permutation that
fixes every vertex of the Kneser graph fixes every point.

An independent set in $K(10,3)$ is the same thing as an intersecting family
of three-subsets.  The Erd\H{o}s--Ko--Rado bound
\cite{erdoskorado1961intersection} gives
\[
  |\mathcal F|\leq \binom{9}{2}=36.
\]
If the total intersection of $\mathcal F$ is empty, the Hilton--Milner bound
\cite{hiltonmilner1967some} sharpens this to
\[
  |\mathcal F|\leq
  \binom{9}{2}-\binom{6}{2}+1=22<36.
\]
Thus a maximum intersecting family has nonempty total intersection.  It is
then contained in a point star, and equality in the 36-element bound forces
it to be the whole star.  Consequently the maximum independent sets are
precisely the ten point stars $\mathcal S_x$.

Every graph automorphism permutes these stars and hence determines a
permutation of $\Omega$.  The resulting homomorphism
\(\operatorname{Aut}(K(10,3))\to S_{10}\) has trivial kernel: if every star
is fixed setwise, then the membership pattern
\(T\in\mathcal S_x\Longleftrightarrow x\in T\) fixes every triple $T$.
Thus the kernel is trivial.  All natural point permutations already occur,
so the homomorphism is onto and the asserted equality follows.  This also
agrees with the general Kneser-graph automorphism result obtained in
\cite{mirafzal2019automorphism}.
\end{proof}

\subsection{The induced cohomology action}

By Theorem~\ref{thm:v2-aut-kneser}, every base automorphism is represented by
a point permutation $g\in S_{10}$, which acts simplicially on the matching
complex.  On an edge cochain the induced action is
\begin{equation}
\label{eq:v2-cochain-action}
  (g\alpha)(\{S,T\})=\alpha(\{g^{-1}S,g^{-1}T\}).
\end{equation}
The action in \eqref{eq:v2-cochain-action} commutes with the coboundary: for
every cochain $c$, \(\delta(gc)=g(\delta c)\).  It follows that the action
preserves \(Z^1=\ker\delta^1\) and
\(B^1=\operatorname{im}\delta^0\), and hence descends to
\[
  H^1(M_3(10);\mathbb F_2)=Z^1/B^1.
\]

For a cocycle $\alpha$, write
$p_\alpha:\widetilde G_\alpha\to K(10,3)$ for the natural projection.

\begin{proposition}[Classification allowing a base relabeling]
\label{prop:v2-base-relabeling-classification}
Let $\alpha,\beta\in Z^1(M_3(10);\mathbb F_2)$ and let
$g\in\operatorname{Aut}(K(10,3))$.  There is an isomorphism
$\Phi:\widetilde G_\alpha\to\widetilde G_\beta$ satisfying
\[
  p_\beta\circ\Phi=g\circ p_\alpha
\]
if and only if $[\beta]=g[\alpha]$ in $H^1(M_3(10);\mathbb F_2)$.
\end{proposition}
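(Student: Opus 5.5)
The plan is to reduce the statement to the fixed-base case, Proposition~\ref{prop:v2-gauge-change}, by transporting the cover $\widetilde G_\alpha$ along $g$. Define the relabeled lift map
\[
  \Psi_g:\widetilde G_\alpha\longrightarrow\widetilde G_{g\alpha},
  \qquad (v,\epsilon)\longmapsto(gv,\epsilon),
\]
where $g\alpha$ is given by \eqref{eq:v2-cochain-action}.

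First I check that $\Psi_g$ is an isomorphism. An edge $(u,\epsilon)(v,\epsilon+\alpha(uv))$ maps to $(gu,\epsilon)(gv,\epsilon+\alpha(uv))$. Since
\[
  (g\alpha)(gu\,gv)=\alpha(uv),
\]
this image is an edge of $\widetilde G_{g\alpha}$. The inverse map is $\Psi_{g^{-1}}$, and by construction $p_{g\alpha}\circ\Psi_g=g\circ p_\alpha$.

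For the ``if'' direction, suppose $[\beta]=g[\alpha]$. Then $\beta=g\alpha+\delta f$ for some $f\in C^0$. Proposition~\ref{prop:v2-gauge-change} supplies an isomorphism $\Phi_f:\widetilde G_{g\alpha}\to\widetilde G_\beta$ over the base. The composite $\Phi=\Phi_f\circ\Psi_g$ then satisfies $p_\beta\circ\Phi=g\circ p_\alpha$.

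For the ``only if'' direction, start from an isomorphism $\Phi$ with $p_\beta\circ\Phi=g\circ p_\alpha$. The composite $\Phi\circ\Psi_{g^{-1}}:\widetilde G_{g\alpha}\to\widetilde G_\beta$ commutes with the projections to the base. It therefore sends the fiber over $w$ to the fiber over $w$. It is a bijection on vertices, so it restricts to a bijection on each two-point fiber, which is exactly the hypothesis of the converse in Proposition~\ref{prop:v2-gauge-change}. That proposition gives $\beta=g\alpha+\delta f$, hence $[\beta]=g[\alpha]$; this is well defined because the action preserves $Z^1$ and $B^1$.

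There is no real obstacle. The one point needing care is the consistency of the convention $(g\alpha)(\{S,T\})=\alpha(\{g^{-1}S,g^{-1}T\})$: the lift of $g$ must land in $\widetilde G_{g\alpha}$ and not in $\widetilde G_{g^{-1}\alpha}$. I also need fiber preservation to come from the commuting square, not from an extra hypothesis.
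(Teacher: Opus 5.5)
Your proposal is correct and follows essentially the same route as the paper's proof. Both arguments lift $g$ to $(v,\epsilon)\mapsto(gv,\epsilon)$, an isomorphism $\widetilde G_\alpha\to\widetilde G_{g\alpha}$, and then reduce each direction to Proposition~\ref{prop:v2-gauge-change} by composing with this lift or its inverse; your remark that fiber preservation follows from the commuting square is a detail the paper leaves implicit.
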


\begin{proof}
The natural lift of $g$ is the isomorphism
\[
  \widehat g:\widetilde G_\alpha\longrightarrow\widetilde G_{g\alpha},
  \qquad \widehat g(v,\epsilon)=(gv,\epsilon).
\]
This follows directly from \eqref{eq:v2-cochain-action} and the lift edge
rule.  By Proposition~\ref{prop:v2-gauge-change}, a fixed-base isomorphism
from $\widetilde G_{g\alpha}$ to $\widetilde G_\beta$ exists exactly when
$\beta-g\alpha\in B^1$.  Composing it with $\widehat g$ gives an isomorphism
covering $g$.  Conversely, if $\Phi$ covers $g$, then
$\Phi\circ\widehat g^{-1}$ is a fixed-base isomorphism, so the same
proposition gives $\beta-g\alpha\in B^1$.  This is equivalent to
$[\beta]=g[\alpha]$.
\end{proof}

Thus the orbits below classify the covers up to isomorphisms that cover a
relabeling of $K(10,3)$.

\subsection{Burnside reduction}

The conjugacy classes of $S_{10}$ are indexed by the 42 partitions of 10.
If
\[
  \lambda=1^{m_1}2^{m_2}\cdots10^{m_{10}}\vdash10,
\]
then the size of the associated conjugacy class is
\begin{equation}
\label{eq:v2-conjugacy-class-size}
  |C_\lambda|=\frac{10!}{\prod_i i^{m_i}m_i!}.
\end{equation}
Let $d_\lambda$ denote the dimension of the fixed subspace of $H^1$ for a
representative of cycle type $\lambda$.  Since that subspace has
$2^{d_\lambda}$ elements, combining
\eqref{eq:v2-conjugacy-class-size} with Burnside's lemma gives
\begin{equation}
\label{eq:v2-burnside-orbits}
  N=\frac{1}{10!}\sum_{\lambda\vdash10}|C_\lambda|2^{d_\lambda}.
\end{equation}
Thus it is enough to compute one fixed-space dimension for each partition;
neither all $10!$ permutations nor all $2^{42}$ cohomology classes need be
enumerated.

The following lemma supplies the fixed-space dimensions from edge-coordinate
ranks.  Here the same letter $g$ denotes the induced permutation matrix on
the 2100 edge coordinates.

\begin{lemma}[Fixed-quotient rank]
\label{lem:v2-fixed-quotient-rank}
Let
\[
  Z\in\mathbb{F}_2^{2100\times161},\qquad
  B\in\mathbb{F}_2^{2100\times119}
\]
be full column rank matrices whose column spaces are, respectively, $Z^1$
and $B^1$.  For $g\in S_{10}$, put
\[
  r_g=\operatorname{rank}[(g-I)Z\mid B].
\]
Then
\[
  \dim((H^1)^g)=161-r_g.
\]
\end{lemma}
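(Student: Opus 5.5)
We identify the fixed subspace $(H^1)^g$ with the preimage of the $g$-fixed classes inside $Z^1$, reduced modulo $B^1$, and then express that preimage as the kernel of an explicit matrix. The one structural input is that $g$ preserves both $Z^1$ and $B^1$, as recorded after \eqref{eq:v2-cochain-action}. Consider
\[
  W_g=\{z\in Z^1:(g-I)z\in B^1\}.
\]
A class $[z]\in H^1=Z^1/B^1$ is $g$-fixed exactly when $gz-z\in B^1$. Since $B^1\subseteq W_g$ (because $gB^1=B^1$), this gives $(H^1)^g=W_g/B^1$. Hence
\[
  \dim (H^1)^g=\dim W_g-119.
\]

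Next we compute $\dim W_g$ from the matrix $M=[(g-I)Z\mid B]$, which has $161+119=280$ columns. Its kernel consists of pairs $(x,y)\in\mathbb F_2^{161}\times\mathbb F_2^{119}$ with
\[
  (g-I)Zx=By .
\]
Because $Z$ has full column rank, $x\mapsto Zx$ identifies $\mathbb F_2^{161}$ with $Z^1$. The condition on $x$ is then precisely $Zx\in W_g$.

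Consider the projection $(x,y)\mapsto x$ from $\ker M$ onto $\{x: Zx\in W_g\}$. It is surjective by the definition of $W_g$. It is also injective, since for each such $x$ the vector $y$ is uniquely determined because $B$ has full column rank. Hence $\dim\ker M=\dim W_g$.

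Rank–nullity now gives
\[
  \dim W_g=280-r_g,
\]
and therefore
\[
  \dim(H^1)^g=280-r_g-119=161-r_g .
\]

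There is no serious obstacle; the argument is linear algebra. The points that need explicit care are, first, that $B^1\subseteq W_g$, which uses $gB^1\subseteq B^1$, itself following from $g\delta^0=\delta^0 g$. Second, the full-column-rank hypotheses on both $Z$ and $B$ are what make the projection $\ker M\to W_g$ an isomorphism. We also note that the same letter $g$ is used for the induced permutation matrix on the 2100 edge coordinates, so that $(g-I)Z$ is a genuine matrix product.
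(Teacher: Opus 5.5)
Your proof is correct and follows the paper's argument essentially step for step. In both, the kernel of $[(g-I)Z\mid B]$ projects isomorphically onto the admissible coefficient space because $B$ is injective, rank--nullity gives $280-r_g$, and quotienting by the $119$-dimensional copy of $B^1$ (admissible by $g$-invariance of $B^1$) yields $161-r_g$.
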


\begin{proof}
Because $Z$ has full column rank, a cocycle has unique coordinates $z=Zc$.
Its quotient class is fixed by $g$ precisely when $(g-I)Zc\in B^1$.
Consider the linear map
\[
  \begin{aligned}
  \Phi_g:\mathbb{F}_2^{161}\oplus\mathbb{F}_2^{119}
    &\longrightarrow \mathbb{F}_2^{2100},\\
  (c,b)&\longmapsto(g-I)Zc+Bb.
  \end{aligned}
\]
Its rank is $r_g$, so rank--nullity gives
\(\dim\ker\Phi_g=280-r_g\).  A vector $c$ is admissible exactly when some
$b$ makes $(c,b)$ lie in this kernel.  Because $B$ is injective, that $b$ is
unique.  Projection onto the first coordinate is therefore an isomorphism
from $\ker\Phi_g$ onto the admissible $c$-space.

Since $B^1\subseteq Z^1$ and $Z$ identifies $\mathbb F_2^{161}$ with $Z^1$,
the preimage $Z^{-1}(B^1)$ has dimension 119.  By the $g$-invariance of
$B^1$, it is a subspace of the admissible space.  Quotienting by this
subspace gives precisely the fixed part of $Z^1/B^1$.  Hence
\[
  \dim((H^1)^g)=(280-r_g)-119=161-r_g.
\]
\end{proof}

\subsection{The orbit count}

Table~\ref{tab:v2-burnside-data} in
Appendix~\ref{app:v2-burnside-table} records the 42 fixed dimensions obtained
from those ranks by Lemma~\ref{lem:v2-fixed-quotient-rank}, together with
their contributions to \eqref{eq:v2-burnside-orbits}; the supplement
reproduces the finite calculations.  The conjugacy-class sizes sum to
$3,628,800=10!$, while the weighted contributions sum to
$4,519,289,376,000$.  It follows that
\[
  N=\frac{4,519,289,376,000}{3,628,800}=1,245,395.
\]
The zero cohomology class is fixed by every base automorphism and forms one
orbit.  By Corollary~\ref{cor:v2-fixed-base-cover-count}, it is the unique
disconnected fixed-base class.  Removing that orbit leaves exactly
$1,245,394$ nonzero, and hence connected-cover, base-automorphism orbits.

\begin{theorem}[Number of base-automorphism orbits]
\label{thm:v2-base-automorphism-orbits}
The action of $\operatorname{Aut}(K(10,3))=S_{10}$ on fixed-base gauge classes
has $1,245,395$ orbits.  Among them, exactly $1,245,394$ are nonzero; each
corresponds to a connected binary cover.
\end{theorem}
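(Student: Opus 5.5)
The proof assembles results already in place, and only one ingredient is computational. First I will fix what acts on what. By Theorem~\ref{thm:v2-aut-kneser}, $\operatorname{Aut}(K(10,3))=S_{10}$, acting through point permutations. The induced cochain action \eqref{eq:v2-cochain-action} commutes with $\delta$, so it descends to a linear action on $H^1(M_3(10);\mathbb F_2)=Z^1/B^1$. By Theorem~\ref{thm:v2-fixed-base-classification}, this quotient is exactly the set of fixed-base gauge classes. By Proposition~\ref{prop:v2-base-relabeling-classification}, its orbits are the covers up to isomorphisms covering a relabeling. The orbit count is therefore the number of $S_{10}$-orbits on a $42$-dimensional $\mathbb F_2$-vector space (Proposition~\ref{prop:v2-m310-binary-cohomology}).

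Next I apply Burnside's lemma in the form \eqref{eq:v2-burnside-orbits}. The fixed set of $g$ is the subspace $(H^1)^g$, of size $2^{d_g}$. Since $d_g$ depends only on the conjugacy class, the sum runs over the $42$ partitions $\lambda\vdash10$ with weights $|C_\lambda|$ from \eqref{eq:v2-conjugacy-class-size}. For each $\lambda$, I pick one representative permutation $g_\lambda$ and write down explicit bases $Z$ and $B$ of $Z^1$ and $B^1$. For $B$, take the columns $\delta^0(e_v)$ for $119$ of the $120$ vertices. For $Z$, take a basis of the $161$-dimensional kernel of $\delta^1$. I then compute $r_{g_\lambda}=\operatorname{rank}[(g_\lambda-I)Z\mid B]$ by exact row reduction over $\mathbb F_2$, and Lemma~\ref{lem:v2-fixed-quotient-rank} gives $d_\lambda=161-r_{g_\lambda}$.

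Two sanity checks apply. The identity must give $d=42$ and contribute $2^{42}$. The final division by $10!$ must be exact. I then sum $|C_\lambda|2^{d_\lambda}$, recorded as Table~\ref{tab:v2-burnside-data}, to get $4{,}519{,}289{,}376{,}000$, and dividing by $3{,}628{,}800$ gives $N=1{,}245{,}395$.

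The connectivity refinement is formal. The zero class is fixed by every $g$, since the action is linear, so it forms a singleton orbit. Every other orbit consists only of nonzero classes. By Corollary~\ref{cor:v2-connected-cohomology-class}, nonzero classes are exactly those with connected lifts, and connectivity is preserved by the isomorphisms $\widehat g$ in any case. This leaves $1{,}245{,}394$ orbits of connected covers.

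The main obstacle is the reliability of the $42$ rank computations, since no conceptual formula is offered for $d_\lambda$. I would cross-check them in three ways. First, compare with the character-theoretic count over $\mathbb C$: $\dim (S^{(5,5)})^{\langle g\rangle}$ is an independent, averaged character value and should agree with $d_\lambda$ whenever $g$ has odd order, because $\mathbb F_2$-representations of odd-order groups behave semisimply. Second, check that the resulting $N$ is an integer. Third, for $2$-power-order elements, recompute with an independent choice of the bases $Z$ and $B$.
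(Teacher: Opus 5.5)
Your proposal follows the paper's argument essentially step for step: Burnside's lemma over the 42 conjugacy classes, fixed-space dimensions $161-r_g$ from Lemma~\ref{lem:v2-fixed-quotient-rank}, the tabulated sum $4{,}519{,}289{,}376{,}000/10!=1{,}245{,}395$, and removal of the singleton zero orbit via Corollary~\ref{cor:v2-connected-cohomology-class}. Your odd-order character cross-check, which the paper does not use, is valid here, but semisimplicity alone does not justify it: it also needs $H^1(M_3(10);\mathbb F_2)$ to be the mod-$2$ reduction of the integral lattice $H^1(M_3(10);\mathbb Z)$, which holds because $\dim_{\mathbb F_2}H^1=42=\dim_{\mathbb C}H^1$ rules out $2$-torsion in $H_1(M_3(10);\mathbb Z)$.
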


\subsection{The explicit five-subset orbit}

For each $A\in\binom{\Omega}{5}$, put
\[
  t_A(S)=|S\cap A|
\]
for a triple $S$.  On an edge $\{S,T\}$ of $K(10,3)$, define
\begin{equation}
\label{eq:v2-five-subset-voltage}
  \alpha_A(S,T)=1\quad\Longleftrightarrow\quad
  \{t_A(S),t_A(T)\}\in\bigl\{\{0,1\},\{1,3\}\bigr\}.
\end{equation}

\begin{theorem}[The five-subset orbit]
\label{thm:v2-explicit-five-subset-orbit}
The voltages \eqref{eq:v2-five-subset-voltage}, as $A$ ranges over the
five-subsets of $\Omega$, represent exactly 126 fixed-base gauge classes.
These classes form one nonzero $S_{10}$-orbit.  The stabilizer of each class
has order $28{,}800$ and is isomorphic to $S_5\wr C_2$.
\end{theorem}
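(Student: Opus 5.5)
First I will check that each $\alpha_A$ is a cocycle, and place it in the type framework. For $d=3$ and $|A|=5$ we have $a'=5$ and $L=\max(0,5-4)=1$. By Proposition~\ref{prop:v2-type-feasibility}, every triangle has type sum $4$ or $5$. The auxiliary edge set is $\{01,13\}$, and in the $d=3$ discussion this was shown to differ from $\{12\}$ by the cut at type $1$. Hence $[\alpha_A]=[\alpha'_A]$, where $\alpha'_A$ is the voltage of the single edge $12$. That voltage is a cocycle by the parity argument of Theorem~\ref{thm:v2-uniform-family}. Its class is nonzero by the closed walk exhibited there, or equivalently by the last assertion of Theorem~\ref{thm:v2-type-classification} with $F_{3,5}=\{12\}$. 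It is also clear that $g\alpha_A=\alpha_{gA}$ for $g\in S_{10}$, since $t_{gA}(gS)=t_A(S)$. So the set $\{[\alpha_A]\}$ is a single $S_{10}$-orbit, which is nonzero and therefore consists of connected covers.

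Next I would determine the orbit size through the stabilizer. Note that $\binom{10}{5}=252=2\cdot126$. I expect $[\alpha_A]=[\alpha_{\Omega\setminus A}]$. Complementation sends type $i$ to $3-i$, so the edge $12$ goes to the edge $21$, which is the same unordered pair. Thus $\alpha'_A=\alpha'_{\Omega\setminus A}$ already at the cochain level. It follows that the stabilizer contains the setwise stabilizer of the partition $\{A,\Omega\setminus A\}$, which is $S_5\wr C_2$ of order $2\cdot 120^2=28{,}800$. This gives orbit size at most $10!/28{,}800=126$.

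For equality I must show that distinct partitions give distinct classes, i.e.\ that $[\alpha'_A]+[\alpha'_{A'}]\neq0$ whenever $\{A,\Omega\setminus A\}\neq\{A',\Omega\setminus A'\}$. Equivalently, the stabilizer is no larger than $S_5\wr C_2$. The subgroups of $S_{10}$ containing $S_5\wr C_2$ are only itself and $S_{10}$, since it is a maximal imprimitive subgroup. So it suffices to exclude that the class is $S_{10}$-invariant. One route is to exhibit a single transposition $g=(x\,y)$ with $x\in A$, $y\notin A$ such that $\alpha'_A+\alpha'_{gA}$ is not a coboundary. I will do this by finding a closed walk in $K(10,3)$ with odd total voltage for $\alpha'_A+\alpha'_{gA}$, using Lemma~\ref{lem:v2-connected-lift}. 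A short cycle such as a $4$- or $5$-cycle built as in the proof of Theorem~\ref{thm:v2-uniform-family} should work: I will count $12$-type edges with respect to both $A$ and $gA$ along it and check that the parities differ. An alternative is to read off from Table~\ref{tab:v2-burnside-data} that $S_{10}$ fixes only the zero class, for example because the fixed dimension $d_{(1^{10})}$ pattern forces it. The direct walk is cleaner.

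Finally, the isomorphism type follows: the stabilizer equals the setwise stabilizer of $\{A,\Omega\setminus A\}$, namely $(S_5\times S_5)\rtimes C_2=S_5\wr C_2$, of order $28{,}800$, and $10!/28{,}800=126$. The main obstacle is the distinctness step, namely producing an explicit odd closed walk for the difference of two classes. If the maximality shortcut is considered insufficiently rigorous, I will instead directly show that no element outside $S_5\wr C_2$ fixes the class. Any such element maps $\{A,\Omega\setminus A\}$ to a different partition, so the question again reduces to exhibiting an odd walk for $\alpha'_A+\alpha'_{A'}$ for each orbit type of pairs of partitions. Under $S_5\wr C_2$ these are classified by the overlap $|A\cap A'|\in\{1,2\}$ up to complement. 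That gives two walk computations.
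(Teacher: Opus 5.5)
Your route is correct and genuinely different from the paper's at the stabilizer step, but the one computation it depends on is only promised, not carried out.

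\textbf{Comparison with the paper.} For complement invariance, the paper writes down an explicit potential $f_A$ and checks eight type pairs. You pass to the gauge-equivalent representative $\alpha'_A$ supported on the type pair $\{1,2\}$. That pair is fixed by $i\mapsto 3-i$, so $\alpha'_A=\alpha'_{A^c}$ holds as cochains, which is cleaner. (The paper's $f_A$ is just the sum of the type-$1$ cuts for $A$ and $A^c$.)

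The larger difference concerns the stabilizer. The paper shows directly that the family has exactly $126$ distinct classes. It separates $A$ from $C$ by a closed walk for each overlap $r=|A\cap C|\in\{1,2,3,4\}$, and then reads off the stabilizer order by orbit--stabilizer. You instead use maximality of $S_5\wr C_2$ in $S_{10}$, which reduces everything to showing the class is not $S_{10}$-invariant, so one walk suffices. Your version saves computation and explains why the answer is forced. The paper's is self-contained and exhibits the complete coincidence pattern of the family.

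Your remark that the fallback needs only two walks is also right. Since $[\alpha_C]=[\alpha_{C^c}]$, overlaps $r$ and $5-r$ are equivalent, so two of the paper's four rows are redundant.

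The maximality claim needs one line of proof. The point stabilizer $S_4\times S_5$ of $S_5\wr C_2$ has orbits of sizes $1,4,5$, so $\{A,A^c\}$ is its only nontrivial block system. Hence any proper overgroup is primitive and contains a transposition, so it equals $S_{10}$ by Jordan's theorem.

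\textbf{The missing step.} You never exhibit the odd closed walk, and until you do, nothing rules out the stabilizer being all of $S_{10}$, with the family collapsing to a single class. A witness is easy to give. Take $A=\{0,1,2,3,4\}$ and $g=(4\,5)$, so $gA=\{0,1,2,3,5\}$, and let $W$ be the walk \texttt{012--345--016--347--012}. Along $W$ the $A$-types are $3,2,2,2$ and the $gA$-types are $3,2,2,1$. Hence $\alpha'_A(W)=0$ and $\alpha'_{gA}(W)=1$, so $\alpha'_A+\alpha'_{gA}$ is not a coboundary. This is the paper's $r=4$ row.

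You should also drop the suggested alternative through Table~\ref{tab:v2-burnside-data}. That table lists fixed dimensions of single elements, and $d_{(1^{10})}=42$ is just the whole space. The $S_{10}$-invariant subspace is an intersection of such fixed spaces, which the table does not record. The table only gives the bound $\dim(H^1)^{S_{10}}\leq 2$, so it cannot show that $S_{10}$ fixes only the zero class.
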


\begin{proof}
Let $A_0=\{0,1,2,3,4\}$.  For three pairwise-disjoint triples, their
intersection sizes with $A_0$ sum to four or five.  The possible sorted types
are
\[
  \begin{array}{ccc}
  (0,1,3),&(0,2,2),&(1,1,2),\\
  (0,2,3),&(1,1,3),&(1,2,2).
  \end{array}
\]
For the selected type pairs $\{0,1\}$ and $\{1,3\}$, the respective numbers
of selected edges in these six triangles are
\[
  (2,0,0,0,2,0).
\]
Every triangle therefore has even total voltage, so
Proposition~\ref{prop:v2-triangle-criterion} shows directly that
$\alpha_{A_0}$ is a cocycle.  If $g\in S_{10}$, then
\eqref{eq:v2-cochain-action} and \eqref{eq:v2-five-subset-voltage} give
\[
  g\alpha_A=\alpha_{gA}.
\]
Every five-subset is $gA_0$ for some $g$, so every $\alpha_A$ is a cocycle
and the resulting cohomology classes form a single orbit.

We first identify the repetitions in this family.  Write
$A^c=\Omega\setminus A$ and define the zero-cochain
\[
  f_A(S)=\begin{cases}
    1,&t_A(S)\in\{1,2\},\\
    0,&t_A(S)\in\{0,3\}.
  \end{cases}
\]
Since $t_{A^c}(S)=3-t_A(S)$, the possible unordered pairs
$\{t_A(S),t_A(T)\}$ on an edge are
\[
  \{0,1\},\ \{0,2\},\ \{0,3\},\ \{1,1\},\
  \{1,2\},\ \{1,3\},\ \{2,2\},\ \{2,3\}.
\]
Checking these eight pairs in \eqref{eq:v2-five-subset-voltage} gives
\[
  \alpha_A(S,T)+\alpha_{A^c}(S,T)
  =(\delta f_A)(\{S,T\}).
\]
Consequently
\begin{equation}
\label{eq:v2-complement-class}
  [\alpha_A]=[\alpha_{A^c}].
\end{equation}

There are no other repetitions.  By $S_{10}$-equivariance, it is enough to
fix $A=A_0$ and put $r=|A\cap C|$, because the setwise stabilizer of $A$ is
transitive on the five-subsets $C$ with a given value of $r$.  The four
intermediate values of $r$ are separated by the following closed walks.  The
last two columns give their voltages under the indicated cocycles.
\[
\begin{array}{cclcc}
\toprule
r&C&\text{closed walk }W&\alpha_A(W)&\alpha_C(W)\\
\midrule
1&\{0,5,6,7,8\}&\texttt{012--345--016--349--012}&0&1\\
2&\{0,1,5,6,7\}&\texttt{012--345--016--348--012}&0&1\\
3&\{0,1,2,5,6\}&\texttt{012--345--016--357--012}&1&0\\
4&\{0,1,2,3,5\}&\texttt{012--345--016--347--012}&0&1\\
\bottomrule
\end{array}
\]
Each row is checked directly from \eqref{eq:v2-five-subset-voltage}.  A
closed-walk voltage is unchanged by a coboundary, since the vertex values
cancel in pairs.  The unequal entries therefore show that
$[\alpha_A]\ne[\alpha_C]$ when $1\leq r\leq4$.  For $r=5$ the subsets are
equal, and for $r=0$ they are complementary, so
\eqref{eq:v2-complement-class} accounts for every equality.  Hence the 252
five-subsets give exactly
\[
  \binom{10}{5}/2=126
\]
fixed-base gauge classes.  The closed walk
\texttt{012--345--016--357--012} has voltage one for $\alpha_{A_0}$, so this
orbit is nonzero.

Finally, the setwise stabilizer of the unordered partition $\{A,A^c\}$ is
\[
  (S_A\times S_{A^c})\rtimes C_2\cong S_5\wr C_2.
\]
The two direct factors fix the cocycle $\alpha_A$, while a block swap sends it
to $\alpha_{A^c}$ and hence fixes its cohomology class by
\eqref{eq:v2-complement-class}.  Thus this wreath product is contained in the
class stabilizer and has order $2(5!)^2=28{,}800$.  On the other hand,
orbit--stabilizer and the orbit size just proved give
\[
  |\operatorname{Stab}_{S_{10}}([\alpha_A])|=10!/126=28{,}800.
\]
The containment is therefore an equality.
\end{proof}

\section{Adjacent Parameter Lines}\label{sec:v2-adjacent-lines}

We now turn from the constructive line $n=2d+1$ to the adjacent interval.
Fix $d\geq3$, write $n=2d+r$ with $2\leq r\leq d$, put $N=n+d$, let
$\Omega$ have size $N$, and let $G=K(N,d)$.  Fix a block
$A\subseteq\Omega$ with $|A|=a$.  A vertex $S$ of $G$ has type
$t(S)=|S\cap A|$.  Complementing $A$ reverses types by $i\mapsto d-i$, so
we may first take $0\leq a\leq N/2$.

Let $Y_{d,n,a}$ be the two-dimensional complex whose vertices, edges, and
faces are respectively the distinct types occurring on vertices, edges, and
triangles of $G$.

\begin{lemma}[Adjacent-line feasibility]\label{lem:v2-adjacent-feasibility}
A type $i$ occurs precisely when
\[
  \max(0,a-n)\leq i\leq\min(d,a).
\]
A pair of distinct types $i,j$ occurs precisely when both $i$ and $j$ satisfy
the single-type bounds and
\[
  a+d-n\leq i+j\leq a.
\]
A triple of distinct types $i,j,k$ occurs precisely when each of $i,j,k$
satisfies the single-type bounds and
\[
  a+2d-n\leq i+j+k\leq a.
\]
\end{lemma}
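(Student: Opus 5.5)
The plan is a direct point count inside and outside the block, exactly as in the proof of Proposition~\ref{prop:v2-type-feasibility}, now with $|\Omega|=N=n+d$ in place of $3d+1$. The first observation is that a vertex $S$ of type $i$ is nothing more than a choice of an $i$-subset of $A$ together with a $(d-i)$-subset of $\Omega\setminus A$. Likewise, a collection of pairwise disjoint vertices with prescribed types is the same as a family of pairwise disjoint subsets of $A$ with the prescribed sizes, together with a family of pairwise disjoint subsets of $\Omega\setminus A$ with the complementary sizes. The two blocks therefore decouple completely. In each block, pairwise disjoint subsets of given sizes exist if and only if the sizes sum to at most the block size.

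For a single type $i$, I will apply this with the block sizes $a$ and $N-a=n+d-a$. The conditions are $0\le i\le a$ and $0\le d-i\le n+d-a$. These rearrange to $\max(0,a-n)\le i\le\min(d,a)$, and by the decoupling they are both necessary and sufficient.

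For a pair of distinct types $i,j$ on an edge, the two disjoint $d$-sets need $i+j\le a$ points in $A$ and $2d-i-j\le n+d-a$ points outside $A$. The second inequality is $i+j\ge a+d-n$. Each endpoint must in addition be a valid vertex, which gives the single-type bounds; these ensure that all the individual sizes are nonnegative. Conversely, under these conditions I will choose disjoint subsets of sizes $i,j$ in $A$ and disjoint subsets of sizes $d-i,d-j$ in the complement, and take their unions. The triple case is identical. Three pairwise disjoint $d$-sets of types $i,j,k$ need $i+j+k\le a$ points in $A$ and $3d-(i+j+k)\le n+d-a$ points outside, that is, $i+j+k\ge a+2d-n$. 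Sufficiency again comes from independent disjoint choices in the two blocks.

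I expect no real obstacle. The only point to check with care is that the single-type bounds are exactly what guarantees nonnegativity of every part size ($i,j,k\ge0$ and $d-i,d-j,d-k\ge0$). The sum conditions alone would not supply this, which is why the statement lists the single-type bounds separately. Distinctness of the types plays no role in the counting; it only records that $Y_{d,n,a}$ is built from distinct types.
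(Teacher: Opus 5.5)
Your proposal is correct and follows the paper's proof: both count the points each configuration uses inside and outside $A$, and both get sufficiency by choosing disjoint parts independently in the two blocks. Your explicit remark that the single-type bounds are exactly what make every part size nonnegative is a small, worthwhile clarification that the paper leaves implicit.
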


\begin{proof}
A type-$i$ $d$-set uses $i$ points in $A$ and $d-i$ outside it.  This gives
the first pair of bounds.  Two disjoint $d$-sets use $i+j$ points in $A$ and
$2d-i-j$ outside it, giving the second bounds; three disjoint sets similarly
use $i+j+k$ and $3d-i-j-k$ points in the two blocks.  Each displayed capacity
condition is sufficient as well: choose the indicated parts successively in
$A$ and independently in its complement.
\end{proof}

Only distinct types are needed in $Y_{d,n,a}$.  In a repeated-type triangle,
the two equal off-diagonal voltage values cancel over $\mathbb F_2$, while a
loop contribution is zero for a loopless voltage.  Thus
Proposition~\ref{prop:v2-triangle-criterion} identifies loopless
type-invariant local-neighborhood-preserving binary voltages with
$Z^1(Y_{d,n,a};\mathbb F_2)$, and type switching adds coboundaries.

\subsection[The base case n=2d+2]{The base case $n=2d+2$}

\begin{proposition}[Base collapse]\label{prop:v2-adjacent-base-collapse}
For every $d\geq3$ and every block size $a$, the complex
$Y_{d,2d+2,a}$ collapses to a tree.  In particular,
\[
  H^1(Y_{d,2d+2,a};\mathbb F_2)=0.
\]
\end{proposition}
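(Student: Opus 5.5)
The plan is to repeat the discrete-Morse argument of Lemmas~\ref{lem:v2-type-pivot} and~\ref{lem:v2-type-collapse}, this time on $Y_{d,2d+2,a}$. The aim is a pivot matching that pairs \emph{every} edge outside a spanning tree with a face, so that no free edges remain. Here $N=3d+2$, and we may take $0\leq a\leq (3d+2)/2$. By Lemma~\ref{lem:v2-adjacent-feasibility} with $n=2d+2$:
\begin{itemize}
\item the types are $0,\ldots,u$ with $u=\min(d,a)$;
\item a pair of distinct types $i<j$ is an edge iff $L'\leq i+j\leq a$, where $L'=\max(0,a-d-2)$;
\item a triple of distinct types is a face iff its sum lies in $\{a-2,a-1,a\}$.
\end{itemize}
Compared with $Y_{d,a}$, the new features are the extra layer of sum-$(a-2)$ faces and the shift of the lower edge bound from $a-d-1$ to $a-d-2$. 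The expectation is that the sum-$(a-2)$ faces exactly absorb the former free edges $F$, which are the pairs $ij$ with $k=a-2-i-j<i$.

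First I will fix the spanning tree $T'$. If $L'=0$, it is the star at $0$. If $L'>0$, it consists of the edges $0j$ for $L'\leq j\leq d$ together with $id$ for $1\leq i<L'$, exactly as in \eqref{eq:v2-type-tree}. One checks that these are edges and form a tree on $u+1$ vertices.

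Next I will define the pivot $p$ on faces $(x,y,z)$ with $x<y<z$:
\begin{itemize}
\item for $x=0$, the pivot is $yz$;
\item for $x>0$ and sum $a-2$, the pivot is $yz$;
\item for $x>0$ and sums $a$ or $a-1$, the rule is a modification of \eqref{eq:v2-type-pivot}, adjusted so that each remaining non-tree edge is hit once.
\end{itemize}
I will prove bijectivity by writing down the inverse, as in Lemma~\ref{lem:v2-type-pivot}, organised by the value of $i+j$ for a non-tree edge $ij$:
\begin{itemize}
\item if $i+j\in\{a-2,a-1,a\}$, the inverse face is $(0,i,j)$;
\item if $i+j<a-2$, put $k=a-2-i-j$; when $k<i$ the inverse face is the sum-$(a-2)$ face $(k,i,j)$, and when $k\geq i$ it is a face of the form $(i,k+1,j)$, $(i,j,k+2)$ or $(i,\cdot,\cdot)$ with shifted third entry;
\item edges with $i+j$ near the lower bound $L'$ are handled by faces containing type $d$.
\end{itemize}
As a sanity check, I will verify the count $\#\text{faces}=\#\text{edges}-u$ independently by counting lattice points.

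For acyclicity, I will use the dependency digraph $g\to f$ whenever $g\neq f$ contains $p(f)$. An edge lies in at most three faces now, one for each admissible sum, so each face has at most two out-neighbours to check. I will seek a potential that strictly increases along every arrow, for instance the lexicographic pair (face sum, $z-y-x$), possibly with the order of sums reversed. Given such a potential, removing faces in topological order gives elementary collapses, leaving $T'$. Since a tree has trivial first cohomology, $H^1=0$ follows.

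The main obstacle is choosing the pivot rule so that bijectivity and monotonicity of the potential hold simultaneously, including the boundary cases $L'>0$ and $u=d$, where the inverse face would need type $d+1$. If a single potential fails, I will instead order faces by sum from top to bottom and collapse the sum-$a$ and sum-$(a-1)$ layers as in Lemma~\ref{lem:v2-type-collapse} first. I would then finish with the sum-$(a-2)$ faces, whose pivots $yz$ become free edges once the higher faces are removed. Small cases ($d=3$, each $a$) will be checked by hand to confirm the rule.
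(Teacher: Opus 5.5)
Your framework matches the paper's: an acyclic pivot matching that collapses $Y_{d,2d+2,a}$ onto a spanning tree, with the sum-$(a-2)$ layer absorbing the old free edges and the new lowest-sum edges. Building the tree from $L'=\max(0,a-d-2)$ instead of the paper's $\max(0,a-d-1)$ is harmless; it only forces $(0,L',d)$ to collapse through $L'd$, which your $x=0$ rule does. The matching itself, however, is wrong at the lower boundary and unspecified elsewhere. For $x>0$ and sum $a-2$ you always take the pivot $yz$. But a sum-$(a-2)$ face $(x,y,d)$ has $x+y=L'$, so the edge $xy$ lies in no other face (the third type would be $d+1$ or $d+2$) and must be the pivot. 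Meanwhile $yd$ is an edge of your own tree, e.g.\ $d=8$, $a=13$, face $(1,2,8)$ with tree edge $28$. Your last inverse bullet tacitly uses $xy$ here, which contradicts your stated rule. For sums $a-1$ and $a$ you only promise a ``modification'' of \eqref{eq:v2-type-pivot}. In fact that rule with $L=\max(0,a-d-1)$ can be kept verbatim, but you must say so and check bijectivity onto the remaining edges.

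The decisive gap is acyclicity, and the orders you propose cannot work for any choice of pivots. An order led by face sum removes an entire layer before touching the others. For $d=5$, $a=8$, each of the faces $(0,2,4)$, $(1,2,4)$, $(1,2,5)$, of sums $6,7,8$, has all three edges in faces of the other two sums, so no layer can be collapsed first. In particular your top-down fallback stalls at $(1,2,5)$, whose inherited pivot $12$ still lies in the sum-$(a-2)$ face $(1,2,3)$. The old potential $z-y-x$ also fails: with the inherited pivot $xz$ on a sum-$(a-1)$ face, the new rival $(x,y-1,z)$ has larger $z-y-x$.

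What is missing is an order that interleaves the three layers. The paper orders faces lexicographically by $K(x,y,z)=(-x,z,\mathbf 1_{x+y+z=a-1})$: larger least type first, then increasing largest type, with sum-$(a-1)$ faces after the others at ties. It then checks case by case that every other face containing a pivot has strictly smaller key:
rivals of a lower-layer pivot $yz$ have larger least type;
a lower-layer pivot $xy$ with $z=d$ has no rival;
rivals of a sum-$a$ pivot $xy$ have smaller largest type;
rivals of a generic sum-$(a-1)$ pivot $xz$ differ only in the middle type and in the last key bit.
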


\begin{proof}
We first eliminate all cycles.  Here $N=3d+2$.  After complementing the
block if necessary, put
\[
  u=\min(d,a),\qquad L_1=\max(0,a-d-1),\qquad
  L_2=\max(0,a-d-2),
\]
and let $E_s$ be the pairs of types with sum between $L_s$ and $a$.
The faces have sums $a-2$, $a-1$, or $a$.  Define
\[
T=\begin{cases}
\{0j:1\leq j\leq u\},&L_1=0,\\
\{0j:L_1\leq j\leq d\}\cup\{id:1\leq i<L_1\},&L_1>0.
\end{cases}
\]
This is a spanning tree: it is a star in the first case; in the second,
$u=d$, the first group joins $L_1,\ldots,d$ to $0$, and the second joins
$1,\ldots,L_1-1$ to $d$.

First consider the faces inherited from $Y_{d,2d+1,a}$, namely those with
sums $a-1$ and $a$.  Set
\[
  \mathcal F=\{yz:x=a-2-y-z,\ 0\leq x<y<z\leq d-1\}.
\]
For $f=(x,y,z)$ with $x<y<z$, define
\[
q(f)=\begin{cases}
yz,&x=0,\\
xy,&x>0\text{ and }x+y+z=a,\\
xy,&x>0,\ x+y+z=a-1,\ z=d,\ x<L_1,\\
xz,&\text{otherwise}.
\end{cases}
\]
The map $q$ is a bijection from these inherited faces onto
$E_1\setminus(T\cup\mathcal F)$.  Indeed, its displayed edges lie in neither
$T$ nor $\mathcal F$.  Conversely, for $ij$ in that complement, the inverse
is $(0,i,j)$ when $i+j\in\{a-1,a\}$, and $(i,j,d)$ when $i+j=L_1$.
Otherwise $L_1<i+j\leq a-2$.  With $x=a-2-i-j$, the fact that
$ij\notin\mathcal F$ gives $x\geq i$; the inverse is $(i,x+1,j)$ if $x+1<j$
and $(i,j,x+2)$ otherwise.  The latter has $x+2\leq d$, from $i+j>L_1$.
These cases are disjoint and recover $ij$ under $q$.

Extend this matching to every face by
\[
p(x,y,z)=\begin{cases}
xy,&x+y+z=a-2\text{ and }z=d,\\
yz,&x+y+z=a-2\text{ and }z<d,\\
q(x,y,z),&x+y+z\in\{a-1,a\}.
\end{cases}
\]
For the lower endpoint, faces with $z<d$ biject to the edges $\mathcal F$ by
$(x,y,z)\mapsto yz$, while faces with $z=d$ biject to
$E_2\setminus E_1$ by $(x,y,d)\mapsto xy$.  Hence $p$ is a bijection from
the faces of $Y_{d,2d+2,a}$ onto $E_2\setminus T$; both the lower endpoint
$a-2$ and the upper endpoint $a$ are included.

This matching is acyclic.  Draw an arrow $g\to f$ when $g\ne f$ also contains
$p(f)$, and order faces by
\[
  K(x,y,z)=(-x,z,\mathbf 1_{x+y+z=a-1}).
\]
Every arrow satisfies
\[
  K(g)<_{\mathrm{lex}}K(f),
\]
so the key strictly increases from $g$ to $f$.  For a lower-endpoint face
with pivot $yz$, another face increases the least type; with pivot $xy$ and
$z=d$, there is no other face.  For a sum-$a$ pivot $xy$, the largest type
decreases.  For the sum-$(a-1)$ endpoint pivot $xy$, its only possible rival
has third type $d-1$.  If $x=0$ and the pivot is $yz$, the only possible rival
replaces $0$ by $1$.  In the remaining sum-$(a-1)$ case, a pivot $xz$ changes
the middle type by one and changes the last key coordinate from $1$ to $0$.
Thus the dependency graph is acyclic.  Removing faces in a topological order
makes each selected edge free, so elementary collapses remove all of
$E_2\setminus T$.  Therefore $Y_{d,2d+2,a}$ collapses to the tree $T$.
This also covers $a=0$ and the cases in which one or more matching families
are empty.  At the upper normalized endpoint, the only additional case occurs
for even $d$ at $a=(3d+2)/2$; then $L_1=d/2$, and the same map and key argument
apply.  A tree has vanishing first cohomology, proving the final assertion.
\end{proof}

\subsection{Induction across the adjacent interval}

We next keep the block size $a$ fixed and increase $r$.  Normalize $a$ for
the terminal value of $r$, so
$a\leq\lfloor(3d+r)/2\rfloor\leq2d$.  If this $a$ is outside the half-range
for $r=2$, complement it there; type reversal gives an isomorphic complex.
Hence Proposition~\ref{prop:v2-adjacent-base-collapse} applies to every block
size needed for the induction.

Suppose $3\leq s\leq r$ and compare $Y_{d,2d+s,a}$ with
$Y_{d,2d+s-1,a}$.  Their vertex sets agree.  New edges have the sole possible
sum $q=a-d-s$.  If $q\leq0$, no distinct nonnegative types have this sum.  If
$q\geq1$, normalization gives $q\leq d-s<d$, so the endpoints $i,j$ are less
than $d$.  The face $(i,j,d)$ has sum $a-s$ and its other two edges have sums
at least $q+1$, exactly the old lower bound.  Any face on the new edge $ij$
has third type $k$ with
\[
  a-s\leq i+j+k\leq a,
\]
and therefore $d\leq k\leq d+s$; hence $k=d$.  Each new edge is consequently
paired with its unique new minimal-sum face.  After these elementary
collapses, only faces attached along old edges remain.  Such attachments do
not change $\ker(\partial_1)$ and only enlarge
$\operatorname{im}(\partial_2)$.  Induction gives
\[
  \ker(\partial_1)=\operatorname{im}(\partial_2),
  \qquad H_1(Y_{d,n,a};\mathbb F_2)=0.
\]
Since the complexes are finite and the coefficients lie in the field
$\mathbb F_2$, the universal coefficient theorem identifies $H^1$ with the
dual of $H_1$.  Hence
\[
  H^1(Y_{d,n,a};\mathbb F_2)=0.
\]

\begin{theorem}[Adjacent-line type-voltage rigidity]
\label{thm:v2-adjacent-rigidity}
Let $d\geq3$ and $2d+2\leq n\leq3d$, and let $A$ be a fixed block in an
$(n+d)$-point ground set.  Every loopless type-invariant
local-neighborhood-preserving binary voltage on $K(n+d,d)$ relative to this
fixed block is gauge equivalent to zero.
\end{theorem}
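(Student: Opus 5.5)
The plan is to assemble the theorem from the type-complex reduction and the vanishing of $H^1(Y_{d,n,a};\mathbb F_2)$ established just above.

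First, reduce to the normalized block. Complementing $A$ replaces each type $i$ by $d-i$. This is a simplicial isomorphism of type complexes, and it carries type-invariant loopless voltages to type-invariant loopless voltages, so we may assume $0\le a\le N/2$ with $N=n+d$. By Lemma~\ref{lem:v2-adjacent-feasibility} and the remark after it, a loopless type-invariant voltage has the form $\alpha_B$ for a subgraph $B$ of the one-skeleton of $Y_{d,n,a}$. Repeated-type triangles contribute nothing. Proposition~\ref{prop:v2-triangle-criterion} then says that $\alpha_B$ preserves every open neighborhood exactly when the indicator cochain of $B$ lies in $Z^1(Y_{d,n,a};\mathbb F_2)$.

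Second, invoke the vanishing $H^1(Y_{d,n,a};\mathbb F_2)=0$ for $2\le r=n-2d\le d$. The base case $r=2$ is Proposition~\ref{prop:v2-adjacent-base-collapse}. The induction argument preceding the theorem handles the step: each new edge of sum $a-d-s$ is collapsed against its unique face $(i,j,d)$, and the remaining attachments along old edges only enlarge $\operatorname{im}\partial_2$. Universal coefficients over the field $\mathbb F_2$ then dualize the vanishing of $H_1$. Hence the cocycle equals $\delta s$ for some type function $s$.

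Third, translate back to the graph. If $s$ is a function on types, then $f=s\circ t$ is a vertex function on $K(n+d,d)$ with $\alpha_B=\delta f$ edge by edge. This holds on edges joining equal types as well, since there both sides vanish: $\alpha_B$ is loopless and $f(S)+f(T)=2s(i)=0$. By Proposition~\ref{prop:v2-gauge-change}, $\alpha_B$ is therefore gauge equivalent to zero, and by Lemma~\ref{lem:v2-connected-lift} its lift is $G\sqcup G$.

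The main obstacle is making sure the induction really covers every block size. The normalization $a\le\lfloor N/2\rfloor$ is taken at the terminal $r$, and at intermediate $s$ the block may fall outside the half-range, so complementation must be applied consistently there. I would also check the claimed bound $q\le d-s$ for the new-edge sum. Since $a\le(3d+r)/2$, one gets $q=a-d-s\le (d+r)/2-s$, and because $s\ge3$ and $r\le d$ this gives $q<d$. That is enough to force the endpoints of a new edge below $d$, so the face $(i,j,d)$ exists.
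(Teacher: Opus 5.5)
Your proposal is correct and follows the paper's own argument: you identify the admissible voltages with $Z^1(Y_{d,n,a};\mathbb F_2)$, get $H^1=0$ from the base collapse at $n=2d+2$ plus the induction that collapses each new edge against its face $(i,j,d)$ and then applies universal coefficients, and pull the type coboundary back to the gauge change $f=s\circ t$ on the full Kneser graph. One small correction to your last paragraph: the paper keeps $a$ fixed throughout the induction, so that $Y_{d,2d+s-1,a}\subseteq Y_{d,2d+s,a}$, and complements only at the base $r=2$, where Proposition~\ref{prop:v2-adjacent-base-collapse} holds for every block size; no complementation is needed at intermediate $s$, because your bound $q\le (d+r)/2-s<d$ from the terminal normalization is all the induction step uses.
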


\begin{proof}
Write $n=2d+r$.  The preceding vanishing makes every such voltage a type
coboundary, say $\alpha(ST)=f(t(S))+f(t(T))$.  Relabeling a lifted vertex by
$(S,\epsilon)\mapsto(S,\epsilon+f(t(S)))$ makes the voltage zero.  The base
Kneser graph is connected: two $d$-sets have a common disjoint $d$-set since
$n+d\geq3d+2$.  Its zero-voltage lift is therefore the disconnected trivial
double cover.
\end{proof}

The boundary is essential.  At $n=2d+1$, the earlier collapse leaves the
free coordinates that produce the constructive connected covers.  At
$n\geq3d+1$, Hall's recognition theorem applies.

\section{Discussion}

The line $n=2d+1$ supports a constructive family whose order is twice that of
the standard graph.  Within the fixed-block classification, the dimension of
the gauge space grows quadratically with the complement-normalized block size
$a'$ in the stable range $d\geq a'\geq5$.  The adjacent-line calculation shows
a contrasting rigidity: throughout $2d+2\leq n\leq3d$, the same type complex
has no first cohomology.

That contrast has a precise methodological scope.  The rigidity theorem
excludes only loopless fixed-block binary type-voltage constructions; it is
not a nonexistence theorem for locally Kneser graphs.  Likewise, in the
$K(10,3)$ analysis a fixed-base gauge class remembers its projection, base
relabeling permits the natural $S_{10}$ action, and abstract graph
isomorphism may forget the projection altogether.  The 240-vertex example
therefore proves existence but does not determine the least order of a
nonstandard finite graph locally $K(7,3)$.

For arbitrary locally Kneser graphs, the range $2d+2\leq n<3d$ is not settled
here.  Examples are known at
$(n,d)=(8,3)$ and $(12,5)$, and nonstandard connected examples also occur at
$n=3d$~\cite{brouwer2024some}; Hall's theorem leaves only the standard
connected graph for $n\geq3d+1$~\cite{hall1987local}.  Any further examples
in the open range must evade the fixed-block binary type-voltage rigidity,
but the present results impose no restriction on other constructions.

There is also a useful topological comparison.  The sum complexes of Linial,
Meshulam, and Rosenthal select faces by sums in a cyclic group and contain a
full lower skeleton~\cite{linialmeshulamrosenthal2010sum}.  Our type complex
instead has a truncated edge set forced by disjointness, and its boundary
cases control the simplicial collapse.  This makes sum complexes relevant
context without making their homological calculations interchangeable with
the one used here.

\section{Reproducibility}\label{sec:v2-reproducibility}

The combined supplement records both finite-computation branches.  Its
$K(10,3)$ inventory contains the cohomology and orbit calculations together
with the symmetry, extension, and explicit-witness certificates; its general
inventory contains the general-type certificate and the parameter-line
survey.  Generators and independently implemented verifiers are included for
the certificate data.

The article PDF and complete supplement are archived together at
\url{https://doi.org/10.5281/zenodo.22092735}.

After extracting
\texttt{locally-kneser-voltage-covers-supplement-v2.0.0.zip}, the documented
entry points are
\begin{center}
\texttt{make verify-k103},\qquad
\texttt{make verify-general},\qquad
\texttt{make snapshot-check}.
\end{center}
They require GNU make and Python 3.10 or later, with no third-party Python
packages.

The universal statements in this paper are established by the proofs in the
text.  The supplement verifies the finite ranks, orbit sums, explicit
witnesses, symmetry data, general-type dimensions, and bounded
parameter-line calculations used in those proofs.

\section*{Acknowledgments}
I thank Andries E. Brouwer for suggesting the auxiliary-graph formulation,
for pointing out the $d=5$ example, and for posing the question of what lies
beyond the line $n=2d+1$.
\appendix
\section{Burnside Data}
\label{app:v2-burnside-table}

The table supplies the finite fixed-space data used in
Theorem~\ref{thm:v2-base-automorphism-orbits}.
The rows below are indexed by cycle type in decreasing lexicographic
partition order.  The fourth column is the number of fixed cohomology
classes, and the final column is its product with the conjugacy-class
size.  A dash in the total row marks a quantity that is not meaningfully
additive across conjugacy classes.

\begingroup
\small
\setlength{\tabcolsep}{4pt}
\begin{longtable}{@{}lrrrr@{}}
\caption{Burnside contributions for the $S_{10}$-action.}
\label{tab:v2-burnside-data}\\
\toprule
Cycle type & Class size & \shortstack{Fixed\\dimension} & \shortstack{Fixed\\classes} & \shortstack{Weighted\\contribution} \\
\midrule
\endfirsthead
\toprule
Cycle type & Class size & \shortstack{Fixed\\dimension} & \shortstack{Fixed\\classes} & \shortstack{Weighted\\contribution} \\
\midrule
\endhead
$10$ & $362{,}880$ & $6$ & $64$ & $23{,}224{,}320$ \\
$9+1$ & $403{,}200$ & $4$ & $16$ & $6{,}451{,}200$ \\
$8+2$ & $226{,}800$ & $6$ & $64$ & $14{,}515{,}200$ \\
$8+1+1$ & $226{,}800$ & $6$ & $64$ & $14{,}515{,}200$ \\
$7+3$ & $172{,}800$ & $2$ & $4$ & $691{,}200$ \\
$7+2+1$ & $259{,}200$ & $4$ & $16$ & $4{,}147{,}200$ \\
$7+1+1+1$ & $86{,}400$ & $6$ & $64$ & $5{,}529{,}600$ \\
$6+4$ & $151{,}200$ & $6$ & $64$ & $9{,}676{,}800$ \\
$6+3+1$ & $201{,}600$ & $6$ & $64$ & $12{,}902{,}400$ \\
$6+2+2$ & $75{,}600$ & $10$ & $1{,}024$ & $77{,}414{,}400$ \\
$6+2+1+1$ & $151{,}200$ & $8$ & $256$ & $38{,}707{,}200$ \\
$6+1+1+1+1$ & $25{,}200$ & $8$ & $256$ & $6{,}451{,}200$ \\
$5+5$ & $72{,}576$ & $10$ & $1{,}024$ & $74{,}317{,}824$ \\
$5+4+1$ & $181{,}440$ & $2$ & $4$ & $725{,}760$ \\
$5+3+2$ & $120{,}960$ & $2$ & $4$ & $483{,}840$ \\
$5+3+1+1$ & $120{,}960$ & $2$ & $4$ & $483{,}840$ \\
$5+2+2+1$ & $90{,}720$ & $4$ & $16$ & $1{,}451{,}520$ \\
$5+2+1+1+1$ & $60{,}480$ & $4$ & $16$ & $967{,}680$ \\
$5+1+1+1+1+1$ & $6{,}048$ & $6$ & $64$ & $387{,}072$ \\
$4+4+2$ & $56{,}700$ & $12$ & $4{,}096$ & $232{,}243{,}200$ \\
$4+4+1+1$ & $56{,}700$ & $12$ & $4{,}096$ & $232{,}243{,}200$ \\
$4+3+3$ & $50{,}400$ & $6$ & $64$ & $3{,}225{,}600$ \\
$4+3+2+1$ & $151{,}200$ & $4$ & $16$ & $2{,}419{,}200$ \\
$4+3+1+1+1$ & $50{,}400$ & $4$ & $16$ & $806{,}400$ \\
$4+2+2+2$ & $18{,}900$ & $14$ & $16{,}384$ & $309{,}657{,}600$ \\
$4+2+2+1+1$ & $56{,}700$ & $12$ & $4{,}096$ & $232{,}243{,}200$ \\
$4+2+1+1+1+1$ & $18{,}900$ & $12$ & $4{,}096$ & $77{,}414{,}400$ \\
$4+1+1+1+1+1+1$ & $1{,}260$ & $14$ & $16{,}384$ & $20{,}643{,}840$ \\
$3+3+3+1$ & $22{,}400$ & $12$ & $4{,}096$ & $91{,}750{,}400$ \\
$3+3+2+2$ & $25{,}200$ & $10$ & $1{,}024$ & $25{,}804{,}800$ \\
$3+3+2+1+1$ & $50{,}400$ & $10$ & $1{,}024$ & $51{,}609{,}600$ \\
$3+3+1+1+1+1$ & $8{,}400$ & $16$ & $65{,}536$ & $550{,}502{,}400$ \\
$3+2+2+2+1$ & $25{,}200$ & $8$ & $256$ & $6{,}451{,}200$ \\
$3+2+2+1+1+1$ & $25{,}200$ & $8$ & $256$ & $6{,}451{,}200$ \\
$3+2+1+1+1+1+1$ & $5{,}040$ & $10$ & $1{,}024$ & $5{,}160{,}960$ \\
$3+1+1+1+1+1+1+1$ & $240$ & $14$ & $16{,}384$ & $3{,}932{,}160$ \\
$2+2+2+2+2$ & $945$ & $26$ & $67{,}108{,}864$ & $63{,}417{,}876{,}480$ \\
$2+2+2+2+1+1$ & $4{,}725$ & $22$ & $4{,}194{,}304$ & $19{,}818{,}086{,}400$ \\
$2+2+2+1+1+1+1$ & $3{,}150$ & $22$ & $4{,}194{,}304$ & $13{,}212{,}057{,}600$ \\
$2+2+1+1+1+1+1+1$ & $630$ & $24$ & $16{,}777{,}216$ & $10{,}569{,}646{,}080$ \\
$2+1+1+1+1+1+1+1+1$ & $45$ & $28$ & $268{,}435{,}456$ & $12{,}079{,}595{,}520$ \\
$1+1+1+1+1+1+1+1+1+1$ & $1$ & $42$ & $4{,}398{,}046{,}511{,}104$ & $4{,}398{,}046{,}511{,}104$ \\
\midrule
\textbf{Total} & $\mathbf{3{,}628{,}800}$ & -- & -- & $\mathbf{4{,}519{,}289{,}376{,}000}$ \\
\bottomrule
\end{longtable}
\endgroup

\begingroup
\small
\bibliographystyle{abbrv}
\bibliography{references}
\endgroup

\end{document}